\newlist{steps}{enumerate}{1}
\newcommand\cyr{%
	\renewcommand\rmdefault{wncyr}%
	\renewcommand\sfdefault{wncyss}%
	\renewcommand\encodingdefault{OT2}%
	\normalfont
	\selectfont}
\DeclareTextFontCommand{\textcyr}{\cyr} 
\DeclareFontFamily{OT1}{rsfs}{}
\DeclareFontShape{OT1}{rsfs}{n}{it}{<-> rsfs10}{}
\DeclareMathAlphabet{\mathscr}{OT1}{rsfs}{n}{it}
\numberwithin{equation}{section}
\newtheorem{theorem}{Theorem}[section]
\newtheorem{lemma}[theorem]{Lemma}
\newtheorem{proposition}[theorem]{Proposition}
\newtheorem{corollary}[theorem]{Corollary}
\newtheorem{question}{Question}
\newtheorem{maintheorem}{Main Theorem}
\theoremstyle{definition}
\newtheorem{definition}[theorem]{Definition}
\newtheorem{remark}[theorem]{Remark}
\theoremstyle{remark}
\newcommand{\Ass}{\operatorname{Ass}}
\newcommand{\im}{\operatorname{Im}}
\renewcommand{\ker}{\operatorname{Ker}}
\newcommand{\Spec}{\operatorname{Spec}}
\newcommand{\id}{\operatorname{id}}
\newcommand{\Ext}{\operatorname{Ext}}
\newcommand{\Supp}{\operatorname{Supp}}
\newcommand{\Hom}{\operatorname{Hom}}
\newcommand{\Att}{\operatorname{Att}}
\newcommand{\codim}{\operatorname{codim}}
\newcommand{\depth}{\operatorname{depth}}
\newcommand{\coker}{\operatorname{Coker}}
\newcommand{\Frac}{\operatorname{Frac}}
\newcommand{\Cl}{\operatorname{Cl}}
\newcommand{\Pic}{\operatorname{Pic}}
\newcommand{\Proj}{\operatorname{Proj}}
\newcommand{\alg}{\operatorname{alg}}
\newcommand{\fm}{\frak{m}}
\newcommand{\fp}{\frak{p}}
\newcommand{\fq}{\frak{q}}
\begin{document}
\title[Remarks on the SCM conjecture and new instances of MCM modules]
{Remarks on the Small Cohen-Macaulay conjecture and new instances of maximal Cohen-Macaulay modules}

\author[K.Shimomoto]{Kazuma Shimomoto}
\address{Department of Mathematics, College of Humanities and Sciences, Nihon University, Setagaya-ku, Tokyo 156-8550, Japan}
\email{shimomotokazuma@gmail.com}

\author[E.Tavanfar]{Ehsan Tavanfar}
\address{School of Mathematics, Institute for Research in Fundamental Sciences (IPM), P. O. Box: 19395-5746, Tehran, Iran}
\email{tavanfar@ipm.ir}

\thanks{2020 {\em Mathematics Subject Classification\/}: 13A35, 13D22, 13D45. \\ 
The research of the first author was partially supported by JSPS Grant-in-Aid for Scientific Research(C) 18K03257.\\
The research of the second author was supported by a grant from IPM}

\keywords{Ample $\mathbb{Q}$-Weil divisors, Buchsbaum rings, Frobenius direct images, Frobenius pushforward, graded rings/modules, maximal Cohen-Macaulay modules,  quasi-Gorenstein rings,  small Cohen-Macaulay conjecture, unique factorization domain.}

\begin{abstract}
We show that any quasi-Gorenstein deformation of a $3$-dimensional quasi-Gorenstein Buchsbaum local ring with $I$-invariant $1$ admits a maximal Cohen-Macaulay module, provided it is a quotient of a Gorenstein ring. Such a class of rings includes two instances of unique factorization domains constructed by Marcel-Schenzel and by Imtiaz-Schenzel, respectively. Apart from this result, motivated by the small Cohen-Macaulay conjecture in prime characteristic, we examine a question about when the Frobenius pushforward $F^e_*(M)$ of an $R$-module $M$ comprises a maximal Cohen-Macaulay direct summand in both local and graded cases.
\end{abstract}

\maketitle
\tableofcontents

\section{Introduction}

The question on the existence of maximal Cohen-Macaulay modules over (Noetherian commutative) complete local  rings has been proposed in the seventies and it is known as \textit{the small Cohen-Macaulay conjecture}. It is an open question in Krull dimension $\ge 3$ in any characteristic. Here is a list of two wide classes of non-Cohen-Macaulay rings over which maximal Cohen-Macaulay modules have been found in the prime characteristic case:

\begin{enumerate}[(i)]
\item
\label{itm:FPureSMCholds}
Any $3$-dimensional complete local $\mathbb{F}_p$-algebra that is $F$-pure (\cite{SchoutensHochster}).

\item
\label{itm:GradedSCMholds}
Any $3$-dimensional complete local ring that arises as the completion of an $\mathbb{N}_0$-graded ring $R$ with $R_{[0]}$ a field of prime characteristic (\cite[Corollary 2]{HochsterCurrent}). 
\end{enumerate}

An essential part of the proof of \ref{itm:GradedSCMholds} is based on the following fact.

\begin{theorem}(\cite[Theorem 1]{HochsterCurrent} and its proof)
\label{Hochster}
Let $R$ be an $\mathbb{N}_0$-graded Noetherian domain of prime characteristic $p>0$.
\begin{enumerate}[(i)]
\item
\label{itm:VeroneseDecomposition}
For each $e\in \mathbb{N}$, the Frobenius direct image $F^e_*(R)$ of $R$  decomposes into $n_e$-number of non-zero direct summands $F^e_*(R)\cong \bigoplus_{i=1}^{n_e}R_{e,i}$ such that $\lim\limits_{e\rightarrow \infty}n_e=\infty$.
			
\item
\label{itm:SomeVeroneseDirectSummandIsCM}
If $R$ is generalized Cohen-Macaulay such that $R_{[0]}$ is a perfect field of characteristic $p>0$, then for each $e \gg 0$  there is some $1\le i\le n_e$ such that $R_{e,i}$ is a maximal Cohen-Macaulay $R$-module.
\end{enumerate}
\end{theorem}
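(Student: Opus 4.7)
The plan is to introduce a $\frac{1}{p^e}\mathbb{N}_0$-grading on $F^e_*(R)$ by placing $F^e_*(R_n)$ in degree $n/p^e$; compatibility with the $R$-module structure is immediate from $r\cdot F^e_*(x) = F^e_*(r^{p^e}x)$, which has degree $m + n/p^e$ when $r \in R_m$ and $x \in R_n$. Grouping the graded pieces of $F^e_*(R)$ by their fractional part yields an $R$-module decomposition
\[
F^e_*(R) \;=\; \bopm_{j=0}^{p^e-1} M_{e,j}, \qquad M_{e,j} \,:=\, F^e_*\!\left(\bopm_{\substack{n\geq 0 \\ n \equiv j\,(\text{mod } p^e)}} R_n\right),
\]
and I would take the $R_{e,i}$ of the statement to be the non-zero $M_{e,j}$.

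For part \ref{itm:VeroneseDecomposition}, let $n_e$ denote the number of non-zero $M_{e,j}$. One may assume $R_+ \neq 0$; then fixing $0 \neq x \in R_{d_0}$ of minimal positive degree $d_0$ and using that $R$ is a domain yields $R_{kd_0} \neq 0$ for every $k \geq 0$. Writing $d_0 = p^a m$ with $p\nmid m$, the residues $\{kd_0 \bmod p^e : k\geq 0\}$ fill $p^{\max(e-a,\,0)}$ distinct classes, whence $n_e \to \infty$.

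For part \ref{itm:SomeVeroneseDirectSummandIsCM}, the key input is that local cohomology commutes with the Frobenius pushforward: one has a graded isomorphism $H^i_\fm(F^e_*(R)) \cong F^e_*(H^i_\fm(R))$ compatible with the $\frac{1}{p^e}\mathbb{Z}$-grading above. Consequently $H^i_\fm(M_{e,j})$ is precisely the piece of $F^e_*(H^i_\fm(R))$ lying in the residue class $j/p^e$ modulo $\mathbb{Z}$. Since $R$ is generalized Cohen-Macaulay, $H^i_\fm(R)$ has finite length for every $i < d := \dim R$, hence is concentrated in a finite set $S_i$ of integer degrees. Once $p^e$ exceeds the diameter of $\bigcup_{i<d} S_i$, the elements of each $S_i$ are pairwise distinct modulo $p^e$, so at most $|S_i|$ indices $j \in \{0, \ldots, p^e-1\}$ satisfy $H^i_\fm(M_{e,j}) \neq 0$.

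A pigeonhole count then finishes the argument: at most $N := \sum_{i=0}^{d-1} |S_i|$ of the $M_{e,j}$ fail to have vanishing local cohomology in degrees below $d$, whereas $n_e \to \infty$, so for $e \gg 0$ one can select $j$ with $M_{e,j} \neq 0$ and $\depth M_{e,j} \geq d$; together with $\dim M_{e,j} \leq d$, this forces $M_{e,j}$ to be maximal Cohen-Macaulay. I expect the main technical step to be verifying the Frobenius-equivariance of local cohomology at the level of the fractional $\frac{1}{p^e}\mathbb{Z}$-grading on $F^e_*(R)$; the remainder is essentially bookkeeping.
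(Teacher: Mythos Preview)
Your proof is correct, and your decomposition into the Veronese pieces $M_{e,j}$ matches the paper's. For part \ref{itm:SomeVeroneseDirectSummandIsCM}, however, your argument differs from Hochster's original one (as summarized at the beginning of \S\ref{SectionCMDirectSummandsOFFrobeniusDirectImages}). Hochster exploits the perfectness of $R_{[0]}$ via the length identity
\[
\ell_R\big(H^i_\fm(R)\big)=\ell_R\big(F^e_*(H^i_\fm(R))\big)=\ell_R\big(H^i_\fm(F^e_*(R))\big),
\]
so that the total length of $\bigoplus_j H^i_\fm(M_{e,j})$ is bounded independently of $e$; since $n_e\to\infty$, some summand must have vanishing lower local cohomology. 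Your approach instead counts the finitely many \emph{degrees} in which $H^i_\fm(R)$ lives and observes that only those residue classes modulo $p^e$ can contribute to a non-vanishing $H^i_\fm(M_{e,j})$. This never uses perfectness of $R_{[0]}$ and is precisely the degree-based argument the paper deploys in Proposition~\ref{PropositionAlternativeProof1} to \emph{remove} the perfect-residue-field hypothesis. So you have in effect rediscovered the paper's strengthening rather than Hochster's proof; the length argument is what actually requires the extra hypothesis in the statement you were asked to prove.

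Two minor remarks. First, the ``diameter'' condition you impose is not needed for the bound ``at most $|S_i|$ indices $j$ satisfy $H^i_\fm(M_{e,j})\neq 0$'': that bound holds for every $e$, since the map $S_i\to \mathbb{Z}/p^e\mathbb{Z}$ already has image of size at most $|S_i|$. Second, the graded compatibility of $H^i_\fm(F^e_*(-))\cong F^e_*(H^i_\fm(-))$ that you flag as the main technical point is immediate from the \v{C}ech description with respect to homogeneous generators of $\fm$, so no real difficulty lurks there.
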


One can observe that the proof of \cite[Theorem 1]{HochsterCurrent} implies the following local version.

\begin{theorem}
\label{FactNonGradedPrefectResidueField}
Let $R$ be an $F$-finite local ring with perfect residue field and let $M$ be a non-zero finitely generated $R$-module such that the following conditions hold.
\begin{enumerate}[(i)]
\item
\label{itm:FirstConditionOnM}
$M$ is generalized Cohen-Macaulay with $\dim(R)=\dim(M)$.
			
\item
\label{itm:SecondConditionOnM}
For each $e>0$, $F^e_*(M)$ admits a  decomposition $F^e_*(M) \cong \bigoplus_{i=1}^{n_e}M_{e,i}$ into $n_e$-number of non-zero  $R$-direct summands.
			
\item
\label{itm:ThirdConditionOnLimitInfity}
$\lim\limits_{e\rightarrow \infty}n_e=\infty$.
\end{enumerate}
Then for each $e\gg 0$, there is some $1\le i\le n_e$ such that $M_{e,i}$  is a maximal Cohen-Macaulay $R$-module.
\end{theorem}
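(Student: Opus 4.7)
The strategy is to control the non-top local cohomology of each summand $M_{e,i}$ by that of $M$ itself, and then play the unbounded count $n_e$ off against a uniform length bound. The first step is to establish, for every $R$-module $N$ and every $j \ge 0$, the canonical isomorphism of $R$-modules
\[H^j_{\fm}\bigl(F^e_*(N)\bigr) \cong F^e_*\bigl(H^j_{\fm}(N)\bigr).\]
This follows because $F^e_*$ is exact and commutes with localization (inverting $x$ in $F^e_*(N)$ is the same as inverting $x^{p^e}$, equivalently $x$, in $N$), so $F^e_*$ commutes termwise with the \v{C}ech complex computing $H^j_{\fm}$, up to replacing a generating set $x_1, \ldots, x_s$ of $\fm$ by $x_1^{p^e}, \ldots, x_s^{p^e}$; the latter ideal has the same radical as $\fm$, so its local cohomology agrees with $H^j_{\fm}(-)$.

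Next I would exploit the perfect residue field hypothesis. Since the Frobenius $F^e \colon k \to k$ is bijective when $k := R/\fm$ is perfect, one checks directly that $F^e_*(k) \cong k$ as $R$-modules; an induction on length via short exact sequences then yields $\length_R F^e_*(N) = \length_R N$ for every finite length $R$-module $N$. Because $M$ is generalized Cohen--Macaulay, $H^j_{\fm}(M)$ has finite length for every $j < d := \dim R$, so the previous step produces
\[\length_R H^j_{\fm}\bigl(F^e_*(M)\bigr) = \length_R H^j_{\fm}(M) \quad (0 \le j < d),\]
a quantity independent of $e$.

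Finally, the decomposition $F^e_*(M) = \bigoplus_{i=1}^{n_e} M_{e,i}$ distributes over local cohomology, so
\[\sum_{i=1}^{n_e} \length_R H^j_{\fm}(M_{e,i}) = \length_R H^j_{\fm}(M) \quad (0 \le j < d).\]
Thus the number of indices $i$ for which $H^j_{\fm}(M_{e,i}) \neq 0$ for some $j < d$ is bounded above by the $e$-independent constant $C := \sum_{j=0}^{d-1} \length_R H^j_{\fm}(M)$. Since $n_e \to \infty$, for all $e \gg 0$ we have $n_e > C$, so there exists an index $i$ with $H^j_{\fm}(M_{e,i}) = 0$ for every $j < d$. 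Together with $M_{e,i} \neq 0$ and $\dim M_{e,i} \le \dim F^e_*(M) = d$, this forces $\depth_R M_{e,i} = \dim R$, proving that $M_{e,i}$ is MCM. The only genuinely delicate point is the length identity $\length_R F^e_*(N) = \length_R N$, and it is precisely here that the perfect residue field hypothesis is indispensable; without it the total length on the right-hand side of the distributed identity could grow with $e$ and the counting argument would collapse.
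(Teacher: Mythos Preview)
Your proposal is correct and follows exactly the approach the paper attributes to Hochster: the key identity $\ell_R\big(H^i_\fm(F^e_*(M))\big)=\ell_R\big(F^e_*(H^i_\fm(M))\big)=\ell_R\big(H^i_\fm(M)\big)$ under the perfect residue field hypothesis, followed by the pigeonhole count against $n_e\to\infty$. The paper does not write out a separate proof of this statement but simply notes that the argument of \cite[Theorem~1]{HochsterCurrent} carries over verbatim to the local setting, and it later highlights precisely your length identity as the essential trick; your write-up is a faithful and complete expansion of that sketch.
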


In the nineties, Daniel Katz (\cite{KatzOnTheExistence}) proved the existence of maximal Cohen-Macaulay modules over the integral closure of some degree $p$ cyclic extensions of unramified regular local rings of mixed characteristic $p>0$. For other instances of non-Cohen-Macaulay rings over which  maximal Cohen-Macaulay modules have been discovered, we refer the reader to   \cite{MaMaximalCM}, \cite{SridharExistence}, \cite{SridharOnThe} and references therein (the main result of \cite{MaMaximalCM} shows that very small maximal Cohen-Macaulay modules do not exist in general).  Now let us state our first main result.

\begin{maintheorem}\label{MainTheorem1}(Theorem \ref{TheoremMaximalCohenMacaulayModule})
Let $(R,\fm,k)$ be a quasi-Gorenstein local ring which is a homomorphic image of a Gorenstein ring. Suppose that  there is a (possibly empty) regular sequence   $\mathbf{y}$   of $R$  such that $R/\mathbf{y}R$ is quasi-Gorenstein  and Buchsbaum of dimension $3$ and  $\ell\big(H^{2}_{\fm}(R/\mathbf{y}R)\big)=1$. Then $R$ admits a maximal Cohen-Macaulay module. Namely, there is some  $x\in 0:_RH^{\dim(R)-1}_{\fm}(R)$ such that $x,\mathbf{y}$ forms  a regular sequence of $R$. Let $\Omega_{R}$ be the first syzygy of the canonical module $\omega_{R/xR}$ of $R/xR$ in the minimal free resolution of $\omega_{R/xR}$ over $R$. Then $\Omega_{R}$ is a maximal Cohen-Macaulay $R$-module of rank $2$.
\end{maintheorem}

The following is a collection of results concerning Theorem \ref{Hochster} and Theorem \ref{FactNonGradedPrefectResidueField}, where the last one is a geometric manifestation of the small Cohen-Macaulay conjecture, the first one has no perfectness assumption on the residue field and  the residue field of  $R$ in (\ref{itm:Second}) is necessarily imperfect (more comments are given in the sequel).

\begin{maintheorem}
The following statement holds.
\begin{enumerate}\label{MainTheorem2}
\item(Proposition \ref{PropositionAlternativeProof1})
Let $(R,\fm)$ be an $F$-finite  $\mathbb{Z}$-graded local domain and let $M$ be a non-zero generalized Cohen-Macaulay $\mathbb{Z}$-graded $R$-module that is torsion-free (equivalently, $\fm\notin \Ass_R(M)$ and $\dim (M)=\dim (R)$). Then some Veronese module $M^{(p^e,i)}$ is a maximal Cohen-Macaulay $R$-module.

\item\label{itm:Second}(Proposition \ref{PropositionGradedTrueNonGradedWrong})
There is a torsion-free  generalized Cohen-Macaulay module $M$ over a local (non-graded) $F$-finite domain $(R,\fm)$ such that the following holds.
\begin{enumerate}
\item
$F^e_*(M)$ decomposes into $n_e$-number of non-zero $R$-direct summands $M_{e,0},\ldots,M_{e,n_e}$ with $\lim\limits_{e\rightarrow \infty}n_e=\infty $.

\item
No direct summand $M_{e,i}$ of $F_{*}^{e}(M)$ is a maximal Cohen-Macaulay module.
\end{enumerate}

\item(Corollary \ref{CorollaryProjectiveVariety})
Let $X$ be a normal projective variety of dimension $\ge 3$ over an $F$-finite field $k$ and let $D$ be an ample $\mathbb{Q}$-Weil divisor. Assume that $\mathcal{O}_X(\lfloor iD\rfloor)$ is an invertible sheaf for each $i\in \mathbb{Z}$, which is the case whenever $X$ is locally factorial, or $D$ is integral (and Cartier) or $R(X,D)$ satisfies the Goto-Watanabe condition $(\#)$.

\begin{enumerate} [(i)]
\item
\label{itm:MCMSheafEquivalentToExistenceOfACM}
$R(X,D)$ admits a graded maximal Cohen-Macaulay module if and only if there is a  maximal Cohen-Macaulay sheaf on $X$.

\item
\label{itm:LocallyFactorialIsNeedForEquivalenceSomeandAll}
If $X$ is locally factorial, then \label{itm:ACMisIndependeOfDivisor}  $R(X,D)$ admits a maximal Cohen-Macaulay module provided the  generalized section ring of any ample $\mathbb{Q}$-divisor over $X$ admits a maximal Cohen-Macaulay module.
\end{enumerate}
\end{enumerate}
\end{maintheorem}

\subsection{Contents of the article}

In \S \ref{SectionBackground}, we provide background material required  for Section \ref{SectionCMDirectSummandsOFFrobeniusDirectImages}.

 In \S   \ref{SectionquasiGorensteinDeformationsofCertainQuasiGorensteinBuchsbaums}   we present the proof of Main Theorem \ref{MainTheorem1}, and also in Remark \ref{RemarkExamples} we discuss certain instance of rings satisfying the assumptions of Main Theorem \ref{MainTheorem1}.

In \S \ref{SectionCMDirectSummandsOFFrobeniusDirectImages}, we are guided by  the following question:

\begin{question}
\label{QuestionStudiedInthePaper}
Assume that $R$ is a graded ring as in Theorem \ref{Hochster}\ref{itm:SomeVeroneseDirectSummandIsCM} that is not necessarily generalized Cohen-Macaulay. Then is it true that $F^e_*(R)$ admits a maximal Cohen-Macaulay $R$-direct summand for some $e>0$?
\end{question}

One motivation comes from \cite{TavanfarSmall}, where it is shown that the small Cohen-Macaulay conjecture holds true for an arbitrary complete local ring $R$, provided that certain blow-up algebra(s) of an excellent regular local ring $A$ such that $A$ surjects onto $R$ admit(s) a graded maximal Cohen-Macaulay module, so that the general case of the conjecture can be rearranged in a graded setting. In view of Corollary \ref{CorollaryTrivialDeformation}, there is an affirmative answer to Question \ref{QuestionStudiedInthePaper} at least in the case where $R$ is a trivial deformation of such a graded generalized Cohen-Macaulay ring. See also Proposition \ref{PropositionAlternativeProof1} and Corollary \ref{CorollaryVeroneseCMImpliesGeneralizedCohenMacaulay}. In relation with Question \ref{QuestionStudiedInthePaper}, we were curious about the necessity of perfectness assumption on the residue field of $R$ in Theorem \ref{Hochster}\ref{itm:SomeVeroneseDirectSummandIsCM} (graded case) and Theorem \ref{FactNonGradedPrefectResidueField} (local case), because after localizing a graded ring  $R$ as in Question \ref{QuestionStudiedInthePaper}, we get a generalized Cohen-Macaulay $F$-finite local ring, and while the Frobenius direct images behave well under localization, one loses perfectness of the residue field upon localization. In view of Proposition \ref{PropositionAlternativeProof1} and Proposition \ref{PropositionGradedTrueNonGradedWrong}, it becomes clear that the perfectness assumption is superfluous in the case of graded rings and Veronese submodules, but is necessary in the local setting.
	
The methods of birational geometry, alterations, or resolution of singularities can  be useful to produce maximal Cohen-Macaulay sheaves over a non-Cohen-Macaulay projective variety. In view of  Corollary \ref{CorollaryProjectiveVariety} and Remark \ref{RemarkProjToCone}\ref{itm:Demazure} under some reasonable conditions, we prove that an $\mathbb{N}_0$-graded normal domain $R$ over an $F$-finite field of prime characteristic admits a maximal Cohen-Macaulay module, provided that $\Proj (R)$ admits a maximal Cohen-Macaulay sheaf (see also Remark \ref{RemarkTavanfarReduction} and Remark \ref{RemarkAltration}). Lemma \ref{LemmaFPureGCMLocal} and Corollary \ref{CorollaryFPureGraded} provide further prime characteristic rings where small Cohen-Macaulay conjecture holds. These results have not been documented in the literature previously.

Summing up, the authors consider Section \ref{SectionCMDirectSummandsOFFrobeniusDirectImages} as an addendum to \cite[Theorem 1]{HochsterCurrent} as well as \cite{SchoutensHochster}. Almost all of the results of Section \ref{SectionCMDirectSummandsOFFrobeniusDirectImages} are interrelated, and more illustrative comments are to be found prior to each result.  We close the introduction by remarking that in the light of Schoutens' paper \cite{SchoutensADifferential}, there is a nice sufficient condition for the decomposability  (resp. direct summands) of the Frobenius direct images  in terms of existence of $F$-invariant derivations in the Kodaira-Spencer kernel (resp. $F$-invariant integrable derivations).

\section{Some preliminary background}
\label{SectionBackground}

Throughout this paper, rings are commutative with identity and are Noetherian. This section provides some background material for Section \ref{SectionCMDirectSummandsOFFrobeniusDirectImages}. Although the content of Section \ref{SectionquasiGorensteinDeformationsofCertainQuasiGorensteinBuchsbaums} is characteristic-free, but in Section \ref{SectionCMDirectSummandsOFFrobeniusDirectImages} rings are often of prime characteristic $p>0$ unless otherwise is mentioned explicitly, and are either local (i.e. quasi-local and Noetherian) or graded local. By a \textit{graded local ring} we mean a Noetherian $\mathbb{Z}$-graded ring $R$ satisfying the following condition.

\begin{enumerate}
\item[(Gr)]
$R$ admits a unique homogeneous maximal ideal that is maximal with respect to all  \textit{proper} ideals of $R$.
\end{enumerate}

A Noetherian $\mathbb{Z}$-graded ring $R$ is graded local precisely when $(R_0,\fm_0)$ is a local ring and
$$
\fm:=\cdots\oplus R_{[-1]}\oplus \fm_{0}\oplus R_{[1]}\oplus \cdots
$$
is the unique homogeneous maximal ideal of $R$. The unique homogeneous maximal ideal of $R$ satisfying $\mbox{(Gr)}$ is denoted by $R_{\pm}$, or $R_{+}$ in the positively graded case (if it is not denoted alternatively). The homogeneous localization of a graded module $M$ at a homogeneous prime ideal $\fp$ is denoted by $M_{(\fp)}$.\footnote{In contrast to our paper, this notation in some texts denotes the degree-zero submodule of the homogeneous localization.} An $R$-module $M$ acquires another $R$-module structure induced by restriction of the $e$-th iterated Frobenius map $f^e:R \to R$ defined by $r \mapsto r^{p^e}$. The resulting $R$-module structure is given by $r (^em)={^e(r^{p^e}m)}$, is called the \textit{($e$-th) Frobenius direct image of $M$} and is denoted by $F^e_*(M)$. If $R$ and $M$ are $\mathbb{Z}$-graded, then $F^e_*(M)$ is $\frac{1}{p^e}\mathbb{Z}$-graded as an $R$-module with $F^e_*(M)_{[\frac{n}{p^e}]}=M_{[n]}$ for each $n\in\mathbb{Z}$. We adopt the convention that all $\mathbb{Z}$-graded rings are non-trivially graded, that is to say, they admit non-zero homogeneous elements of positive degree. In Section \ref{SectionCMDirectSummandsOFFrobeniusDirectImages}, we often work with $F$-finite rings. The $F$-finiteness condition is necessary to ensure that all Frobenius direct images of finitely generated modules and their direct summands are finitely generated. Moreover, a theorem of Kunz says that $F$-finite rings are excellent.  In this paper, a projective variety is assumed to be an integral closed subscheme of some projective space $\mathbb{P}^n$ and in particular, they are all connected. The perfect closure of a field $K$ is denoted by $K^\infty$.

\subsection{Graded direct summands of the Frobenius direct images}

\begin{definition}
\label{FactorizationRemark}
Let $R$ be a Noetherian $\mathbb{Z}$-graded ring of prime characteristic $p>0$ and let $M$ be a finitely generated $\mathbb{Z}$-graded $R$-module. For each $e \in\mathbb{N}$ and $0\le i\le p^e-1$, the \textit{Veronese submodule} of $M$ is  defined by
$$
M^{(p^e,i)}:=\bigoplus_{\substack{j\in\mathbb{Z}\\j{\equiv}i \pmod{p^e}}}M_{[j]}=\cdots \oplus M_{[i-2p^e]} \oplus M_{[i-p^e]} \oplus M_{[i]} \oplus M_{[i+p^e]} \oplus \cdots.
$$
\end{definition}

An important ingredient of the proof  \cite[Theorem 1]{HochsterCurrent} is the fact that each  $M^{(p^e,i)}$ acquires an $R$-module structure by which it is a (graded) $R$-direct summand of $F^e_*(M)$. On the other hand, $M^{(p^e,i)}$ is a module over the $p^e$-th Veronese subring $R^{(p^e)}=\bigoplus_{j \in \mathbb{Z}}R_{[jp^e]}$ of $R$, which is Noetherian. It seems that this $R^{(p^e)}$-module structure of  $M^{(p^e,i)}$ is not pointed out in \cite[Theorem 1]{HochsterCurrent}. However, we are sure that this is well-known to experts.

\begin{remark}
\label{RemarkDifferentPerspective}
Suppose that $R$ is an $F$-finite $\mathbb{Z}$-graded local domain of characteristic $p>0$ and $M$ is a finitely generated $\mathbb{Z}$-graded $R$-module.

\begin{enumerate}[(i)]
\item
\label{itm:VeroneseSubmoduleOverVeronseSubring}
$M^{(p^e,i)}$ has a graded $R^{(p^e)}$-module structure given by the naive action of $R$ on $M$.  Furthermore, one considers $R$ and $M$ as $R^{(p^e)}$-modules via the inclusion $R^{(p^e)}\hookrightarrow R$, so that
$$
R \cong \bigoplus_{i=0}^{p^e-1}R^{(p^e,i)},~M \cong \bigoplus_{i=0}^{p^e-1}M^{(p^e,i)}~\mbox{as}~R^{(p^e)}\text{-graded modules}.
$$

\item
\label{itm:VeroneseSubmoduleOverOriginalRing}
The image of the $e$-th iterated Frobenius map $f^e:R\rightarrow R$ is contained in $R^{(p^e)}$. In particular, $f^e$ makes $R^{(p^e)}$ an $R$-algebra, and $M^{(p^e,i)}$ can be endowed with an $R$-module structure by restriction of its $R^{(p^e)}$-module structure via $f^{p^e}:R\rightarrow R^{(p^e)}$.

\item
\label{itm:VeroneseSubmoduleDirectSummandOfFrobeniusDirectImage}
Consider each $M^{(p^e,i)}$ with its $R$-module structure as in the previous part. Then the $R$-module $F_*^e(M)$ admits a decomposition into direct summands
$$
F_*^e(M) \cong \bigoplus_{i=0}^{p^e-1}M^{(p^e,i)}~\mbox{as}~R\mbox{-graded modules}.
$$

\item
\label{itm:VeroneseSubmoduleCM}
$M^{(p^e,i)}$ is a maximal Cohen-Macaulay $R$-module if and only if it is maximal Cohen-Macaulay as a $R^{(p^e)}$-module via the Independence Theorem \cite[4.2.1]{BrodmannSharpLocal}.

\item
\label{itm:VeronseSubringMCMImplies}
$R$ admits a maximal Cohen-Macaulay module if $R^{(p^e)}$ admits an $R^{(p^e)}$-maximal Cohen-Macaulay module for some $e>0$. Indeed, $R$ is assumed to be $F$-finite and the iteration of the Frobenius map on $R$ factors through $R^{(p^e)}$, so $R^{(p^e)}$ is a module-finite extension of $R$. This happens when $R^{(p^e)}$ is a subring of a regular ring as in \cite[Example 3.6]{ShimomotoFCoherent}, or $R$ is generalized Cohen-Macaulay (see Proposition \ref{PropositionAlternativeProof1}). 

\item
\label{itm:FrobeniusDirectImageOfVeronseIsDirctSumOfVeronese}
It is readily seen that $\big(M^{(p^e,i)}\big)^{(p^{e'},j)}=M^{(p^{e+e',jp^e+i})}$, which yields the ecomposition
$$
F^{e'}_*(M^{(p^e,i)})\cong \bigoplus_{j=0}^{p^{e'}-1}M^{(p^{e+e'},jp^e+i)}~\mbox{as}~R\mbox{-graded modules}.
$$

\item
\label{itm:MTorsionFreeImpliesGoesToInfinity}
Assume that $R$ is an integral domain and $M$ is a torsion-free $R$-module. Then the number of non-zero Veronese submodules $M^{(p^e,i)} \subset F^e_*(M)$ tends to infinity when $e\rightarrow \infty$. Indeed, they carry infinitely many non-zero positively graded components. Pick some $n,m \in \mathbb{N}$ with $R_{[n]}\neq 0$ and $M_{[m]}\neq 0$. Then $0\neq M_{[nkp^e+ni+m]}\subseteq M^{(p^e,ni+m)}$ for each $k\in \mathbb{N}$ and $0\le i\le \lfloor (p^e-1-m)/n\rfloor$ and sufficiently large $e$.
\end{enumerate}
\end{remark}

\subsection{$\mathbb{Q}$-Weil divisors on normal projective varieties}

The content of this subsection is characteristic-free. There is a nice connection between $\mathbb{Z}$-graded modules over normal $\mathbb{N}_0$-graded rings whose degree zero part is a field and rational coefficient ample Weil divisors on normal projective varieties. This connection will be reviewed below, following our need in  Section \ref{SectionCMDirectSummandsOFFrobeniusDirectImages}.

\begin{definition}
\label{DefinitionGotoWatanbeCondition}
Following \cite[page 206]{GotoWatanabeOnGraded}, an $\mathbb{N}_0$-graded ring satisfies the \textit{Goto-Watanabe condition} $(\#)$, provided there is some positive integer $d_0$ such that for each $d\ge d_0$ the Veronese subring, $R^{(d)}$ of $R$ is generated by $R_{[d]}={R^{(d)}}_{[1]}$ over $R_{[0]}$.
\end{definition}

\begin{remark}
\label{RemarkSectionRingOfAmpleGotoWatanabeCondition}
The class of graded rings satisfying $(\#)$, which contains standard graded rings, is nice for algebro-geometric objects. As stated in \cite[in the last line at page $207$ without a proof]{GotoWatanabeOnGraded},  the section ring of an ample integral Cartier divisor  $D$ on a normal projective variety satisfies the Goto-Watanabe condition $(\#)$. Indeed by \cite[Theorem  1.2.6 (Cartan-Serre-Grothendieck Theorem)]{LazarsfeldPositivity}, $dD$ is very ample for $d\gg0$, which implies that $R(X,dD)$ is a standard graded ring for $d\gg 0$.
\end{remark}

We recall our assumption that a projective variety is an integral closed subscheme of $\mathbb{P}^n$ for some $n\in \mathbb{N}$, thus it is a connected scheme. For the notion of an ample $\mathbb{Q}$-divisor on a normal projective variety, we refer to \cite[page 204]{WatanabeSomeRemarks}.

\begin{definition}
\label{DefinitionGeneralizedSectionRing}
Let $D$ be an ample $\mathbb{Q}$-divisor on a normal projective variety $X$. Then the \textit{generalized section ring} of $D$ is defined as
$$
R(X,D):=\bigoplus_{{i\in \mathbb{N}_0}} H^0\big(X,\mathcal{O}_X(\lfloor iD\rfloor)\big).
$$
Let $\mathcal{F}$ be a coherent sheaf on $X$. Then the \textit{graded module associated to the pair $(\mathcal{F},D)$} is defined to be the $\mathbb{Z}$-graded $R(X,D)$-module
$$
\Gamma_*(X,\mathcal{F},D):=\bigoplus_{i\in \mathbb{Z}} H^0\big(X,\mathcal{F}\otimes\mathcal{O}_X(\lfloor iD\rfloor)\big).
$$
\end{definition}

\begin{remark}
\label{RemarkProjToCone}
Following the notation of Definition \ref{DefinitionGeneralizedSectionRing}, let $N \in \mathbb{N}$ be such that $ND$ is an ample Cartier divisor.
\begin{enumerate}[(i)]
\item
\label{itm:Demazure}
Recall the following result of Demazure \cite[Theorem, page 51]{DemazureAnneaux}:

\begin{enumerate}
\item[$\bullet$]
If $R$ is a positively graded Noetherian normal domain with $R_{[0]}=K$ a field and $T$ is a degree $1$ homogeneous element of $\Frac(R)$, then there exists an ample $\mathbb{Q}$-divisor $D$ on $X:=\Proj(R)$ such that
$$
R \cong \bigoplus_{i\ge 0}H^0\big(X,\mathcal{O}_X(\lfloor iD\rfloor)\big){T}^{i}.
$$
\end{enumerate}

For an arbitrary $\mathbb{N}_0$-graded Noetherian normal domain $R$ over a field $R_{[0]}=K$, without loss of generality, we can assume that the greatest common divisor of the generators of $R$ over $K$ is $1$. Indeed, we can change the grading by dividing out by their greatest common divisor and the defining ideal of $R$ remains homogeneous with respect to the given new grading. We can deduce that $\Frac(R)$ admits a homogeneous element of degree $1$ and so we can apply the Demazure's theorem to present $R$ as a generalized section ring of an ample $\mathbb{Q}$-divisor.
			
\item
\label{itm:tiwstedassociatedSheaf}
Setting $M:=\Gamma_*(X,\mathcal{F},D)$, for each $i\in \mathbb{Z}$ we have
$$
\widetilde{M(i)}=\mathcal{F}\otimes \mathcal{O}_X(\lfloor iD\rfloor). 
$$

\item
\label{itm:LocalCohomolgyVSSheafCohomolgy}
For each $m\in \mathbb{Z}, i\ge 1$, we have
$$
H^{i+1}_{R(X,D)_+}\big(\Gamma_*(X,\mathcal{F},D)\big)_{[m]}=H^{i}\big(X,\mathcal{F}\otimes \mathcal{O}_X(\lfloor mD \rfloor)\big).
$$
This follows from part \ref{itm:tiwstedassociatedSheaf} in conjunction with \cite[Proposition 2.1.5]{GrothendieckEGAIII}.
			
\item
\label{itm:ExactSequenceAssociatedGradedModuleAndOriginalModule} (\cite[(5.1.6)]{GotoWatanabeOnGraded})
If $D$ is  integral (and Cartier) and $M$ is a $\mathbb{Z}$-graded $R(X,D)$-module such that $\widetilde{M}=\mathcal{F}$, then there is an exact sequence
$$
0 \rightarrow \Gamma_{R(X,D)_+}(M)\rightarrow M\rightarrow \Gamma_*(X,\mathcal{F},D) \rightarrow H^1_{R(X,D)_+}(M)\rightarrow 0.
$$

\item
\label{itm:Grothendieck}
If $\Ass_X\big(\mathcal{F}\otimes \mathcal{O}_X(\lfloor iD\rfloor)\big)$ does not contain any closed point of $X$ for each $0\le i\le N-1$, then $\Gamma_*(X,\mathcal{F},D)$ is a finitely generated  graded $R(X,D)$-module.
			
\item
\label{itm:DepthOfAssociatedGradedModule}
If $\Ass_X\big(\mathcal{F}\otimes \mathcal{O}_X(\lfloor iD\rfloor)\big)$ does not contain any closed point of $X$ for each $0\le i\le N-1$, then $\text{depth}\big(\Gamma_*(X,\mathcal{F},D)\big)\ge 2$.
			
\item
\label{itm:FactVeroneseModulesFiniteOverVeronesSubring}
$R(X,D)$ is module-finite over the Veronese subring $R^{(m)}$ (see the proof of \cite[Proposition 1.1.2.5]{CoxRing}). In particular, every $R^{(m,j)}$ is a finitely generated $R^{(m)}$-module.
\end{enumerate}
\end{remark}

\begin{remark}
\label{RemarkSemiAmple} In general, in view of \cite[Example 2.1.29]{LazarsfeldPositivity} and \cite[Proposition 1.1.2.5]{CoxRing}, the section ring of any $\mathbb{Q}$-Weil divisor $D$ such that  $ND$ is an integral semi-ample Cartier divisor for some $N\in \mathbb{N}$ is a Noetherian ring. However, because of the discernment of normal graded domains in terms of generalized section rings of ample $\mathbb{Q}$-divisors (Remark \ref{RemarkProjToCone}\ref{itm:Demazure}), we confine ourselves only to ample $\mathbb{Q}$-divisors, as the small Cohen-Macaulay conjecture is easily reduced to the normal case.
\end{remark}

\section{Quasi-Gorenstein deformations of quasi-Gorenstein Buchsbaum rings of dimension $3$ and $I$-invariant $1$} \label{SectionquasiGorensteinDeformationsofCertainQuasiGorensteinBuchsbaums}

A \textit{quasi-Buchsbaum ring} is a local ring $(R,\fm)$ whose non-top local cohomologies supported at $\fm$ are annihilated by $\fm$. The Buchsbaum rings appearing in our paper are quasi-Buchsbaum (\cite[Proposition 2.12]{StuckradVogelBuchsbaum}), and we do not use the notion and properties of $\fm$-weak sequences. So the reader of the paper can omit the definition and properties of Buchsbaum rings. However, for the definition and properties of Buchsbaum rings  we refer to \cite{StuckradVogelBuchsbaum}. Also, see  \cite[Definition 2.1]{MaMaximalCM} for the definition of the \textit{$I$-invariant}, and see \cite[Definition 2.1 and Definition 1.1]{AoyamaSomeBasic} for the definition of quasi-Gorenstein rings as well as the canonical module. It is well-known that any homomorphic image of a local Gorenstein ring admits a canonical module and  a dualizing complex. A Noetherian ring $R$ is said to be \textit{unmixed} provided $\dim(R/\fp)=\dim(R)$ for all $\fp \in \Ass(R)$. The notation $(-)^{\vee}$ denotes the Matlis duality functor and $\ell_R(M)$ denotes the length of an $R$-module $M$.

The main result of this section is Theorem \ref{TheoremMaximalCohenMacaulayModule}, which gives an instance of maximal Cohen-Macaulay modules over certain non-Cohen-Macaulay rings.  We first need a preliminary lemma.

\begin{lemma}
\label{LemmaExtOfCanonicalModule}
Let $(A,\fm)$ be a homomorphic image of some Gorenstein ring with a canonical module $\omega_A$.  
\begin{enumerate}[(i)]
\item
\label{itm:LemmaExtOfCanonicalModuleSpectralSequence} There is an exact sequence
$$
H^{\dim(A)-2}_\fm(\omega_A)\rightarrow \Ext^2_A(\omega_A,\omega_A)^\vee\rightarrow H^{\dim(A)-1}_\fm(A)\otimes \omega_A\rightarrow H^{\dim(A)-1}_\fm(\omega_A)\rightarrow \Ext^1_A(\omega_A,\omega_A)^\vee\rightarrow 0.
$$

\item
\label{itm:LemmaExtOfCanonicalModuleVanishing}
If $H^{\dim(A)-1}_{\fm}(\omega_A)=0$ (e.g. if $\omega_A$ is a maximal Cohen-Macaulay $A$-module), then
$$
\Ext^1_{A}(\omega_A,\omega_A)=0.
$$
\end{enumerate}
\end{lemma}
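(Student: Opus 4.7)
The plan is to derive part (i) from the hyper-$\Ext$ spectral sequence of the normalized dualizing complex of $A$, combined with Grothendieck local duality; part (ii) then drops out immediately.

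More precisely, since $A$ is a homomorphic image of a Gorenstein ring, I would fix a normalized dualizing complex $D_A^\bullet$ with $H^0(D_A^\bullet)\cong\omega_A$. Grothendieck local duality reads $\Ext^i_A(M,D_A^\bullet)\cong H^{d-i}_{\fm}(M)^{\vee}$ (with $d=\dim A$), and, applied to $M=A$, identifies the cohomology of $D_A^\bullet$ as $H^q(D_A^\bullet)\cong H^{d-q}_{\fm}(A)^{\vee}$ for $q\geq 0$ (and zero otherwise). I would then consider the first-quadrant Grothendieck spectral sequence
\[
E_2^{p,q}=\Ext^p_A(\omega_A,H^q(D_A^\bullet))\Longrightarrow\Ext^{p+q}_A(\omega_A,D_A^\bullet)
\]
and read off its standard five-term low-degree exact sequence. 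After substituting local duality on the abutments and rewriting $E_2^{0,1}=\Hom_A(\omega_A,H^{d-1}_{\fm}(A)^{\vee})\cong(\omega_A\otimes_A H^{d-1}_{\fm}(A))^{\vee}$ via Hom-tensor adjunction against $E_A(k)$, this yields
\[
0\to\Ext^1_A(\omega_A,\omega_A)\to H^{d-1}_{\fm}(\omega_A)^{\vee}\to(\omega_A\otimes_A H^{d-1}_{\fm}(A))^{\vee}\to\Ext^2_A(\omega_A,\omega_A)\to H^{d-2}_{\fm}(\omega_A)^{\vee}.
\]
Applying the exact functor $(-)^{\vee}$ to this sequence and simplifying each double Matlis dual returns precisely the five-term sequence asserted in part (i).

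The potential technicality is justifying the simplification of the double duals term by term. After reducing to the $\fm$-adic completion---which leaves each $\Ext^i_A(\omega_A,\omega_A)$ and each $H^i_{\fm}(\omega_A)$ unchanged---every term in the displayed sequence is Matlis reflexive: the $\Ext^i_A(\omega_A,\omega_A)$ are finitely generated; each $H^i_{\fm}(\omega_A)$ is Artinian (its Matlis dual $\Ext^{d-i}_A(\omega_A,D_A^\bullet)$ is finitely generated, so $H^i_{\fm}(\omega_A)$ embeds via the biduality map into its Artinian double dual and is itself Artinian); and $\omega_A\otimes_A H^{d-1}_{\fm}(A)$, being a finitely generated module tensored with an Artinian one, is likewise Artinian. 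Hence $(-)^{\vee\vee}$ acts as the identity on each of the five terms, and part (i) follows.

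For part (ii), the hypothesis $H^{d-1}_{\fm}(\omega_A)=0$---which is automatic when $\omega_A$ is maximal Cohen-Macaulay, since then $H^i_{\fm}(\omega_A)=0$ for all $i<d$---makes $H^{d-1}_{\fm}(\omega_A)^{\vee}=0$ in the pre-dualized sequence above, and the exactness of that sequence at $\Ext^1_A(\omega_A,\omega_A)$ immediately forces $\Ext^1_A(\omega_A,\omega_A)=0$.
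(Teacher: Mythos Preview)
Your proof is correct and is essentially the same argument as the paper's. The paper invokes the change-of-rings Grothendieck spectral sequence $\Ext^i_A(\omega_A,\Ext^j_S(A,S))\Rightarrow\Ext^{i+j}_S(\omega_A,S)$ for a Gorenstein surjection $S\twoheadrightarrow A$ with $\dim S=\dim A$, together with local duality and the five-term exact sequence; since the normalized dualizing complex may be taken to be $D_A^\bullet=\mathbb{R}\Hom_S(A,S)$, your hyper-$\Ext$ spectral sequence $\Ext^p_A(\omega_A,H^q(D_A^\bullet))\Rightarrow\Ext^{p+q}_A(\omega_A,D_A^\bullet)$ is literally the same spectral sequence written in dualizing-complex language, and the rest of the argument (five-term sequence, Hom--tensor adjunction, Matlis dualizing) matches. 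One small quibble: it is $\Ext^i_A(\omega_A,\omega_A)^\vee$, not $\Ext^i_A(\omega_A,\omega_A)$ itself, that is unchanged under completion; but since the three remaining terms $H^i_\fm(\omega_A)$ and $\omega_A\otimes_A H^{d-1}_\fm(A)$ are already Artinian (hence Matlis reflexive) over $A$ without completing, the reduction to $\widehat{A}$ is in fact unnecessary and your dualization goes through directly.
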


\begin{proof}
There exists a Gorenstein local ring $S$ such that $\dim(S)=\dim(A)$ and $A$ is a homomorphic image of $S$. The statement \ref{itm:LemmaExtOfCanonicalModuleSpectralSequence} follows from the Grothendieck spectral sequence
$$
\Ext^i_A\big(\omega_A,\Ext^j_S(A,S)\big)\overset{i,j}{\Rightarrow}\Ext^{i+j}_S(\omega_A,S)
$$
in conjunction with \cite[Theorem 10.33 (Cohomology Five-Term Exact Sequence)]{Rotman} as well as the Local Duality Theorem \cite[11.2.6]{BrodmannSharpLocal}. Part \ref{itm:LemmaExtOfCanonicalModuleVanishing} is an immediate consequence of part \ref{itm:LemmaExtOfCanonicalModuleSpectralSequence}.
\end{proof}

\begin{theorem}
\label{TheoremMaximalCohenMacaulayModule}
Let $(R,\fm,k)$ be a quasi-Gorenstein local ring which is a homomorphic image of a Gorenstein ring. Suppose that  there is a (possibly empty) regular sequence   $\mathbf{y}$   of $R$  such that $R/\mathbf{y}R$ is quasi-Gorenstein  and Buchsbaum of dimension $3$ and  $\ell\big(H^{2}_{\fm}(R/\mathbf{y}R)\big)=1$. Then $R$ admits a maximal Cohen-Macaulay module. Namely, there is some  $x\in 0:_RH^{\dim(R)-1}_{\fm}(R)$ such that $x,\mathbf{y}$ forms  a regular sequence of $R$. Let $\Omega_{R}$ be the first syzygy of the canonical module $\omega_{R/xR}$ of $R/xR$ in the minimal free resolution of $\omega_{R/xR}$ over $R$. Then $\Omega_{R}$ is a maximal Cohen-Macaulay $R$-module of rank $2$.
\end{theorem}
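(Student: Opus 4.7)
My plan is to proceed in six stages, beginning with a careful analysis of the local cohomology of $R$ itself. The first move will be to argue that $R$ is Buchsbaum with $\depth(R)=d-1$ where $d:=\dim R$, and to show $H^{d-1}_{\fm}(R)\cong k$ (length one). Since $\mathbf{y}$ is a regular sequence of length $n=d-3$ on $R$, iterating the long exact sequence of local cohomology associated to $0\to R\xrightarrow{y_i}R\to R/(y_1,\ldots,y_i)R\to 0$, and using that $\fm$ annihilates every non-top local cohomology of the Buchsbaum ring $R/\mathbf{y}R$ (so that each multiplication by $y_i$ vanishes on the relevant pieces), gives the Buchsbaum relations $H^{j}_{\fm}(R/\mathbf{y}R)\cong\bigoplus_{i=0}^{n}\binom{n}{i}H^{j+i}_{\fm}(R)$ as $k$-vector spaces for $j<3$. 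Feeding in $\ell(H^{2}_{\fm}(R/\mathbf{y}R))=1$ and $H^{i}_{\fm}(R)=0$ for $i<d-1$ forces $H^{d-1}_{\fm}(R)\cong k$.

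Next I will produce $x$. Since $H^{d-1}_{\fm}(R)\cong k$ is annihilated by $\fm$, we have $\fm\subseteq 0:_{R}H^{d-1}_{\fm}(R)$; because $R$ and $R/\mathbf{y}R$ both have positive depth, $\fm$ avoids their associated primes, and prime avoidance furnishes $x\in\fm$ which is a non-zero-divisor on both $R$ and $R/\mathbf{y}R$, so (by permutability of regular sequences in a local ring) $x,\mathbf{y}$ is a regular sequence. I will then compute $\omega_{R/xR}$ by applying $R\Hom_{R}(-,\omega_{R}^{\bullet})$ to $0\to R\xrightarrow{x}R\to R/xR\to 0$: using $\omega_{R}=R$ and $H^{-(d-1)}(\omega_{R}^{\bullet})=H^{d-1}_{\fm}(R)^{\vee}=k$ with $x$ acting as zero on $k$, the long exact sequence in cohomology yields the fundamental short exact sequence
\[
0\to R/xR\to\omega_{R/xR}\to k\to 0.
\]

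From this I will read off that $\mu_{R}(\omega_{R/xR})=2$: the inclusion $R/xR\hookrightarrow\omega_{R/xR}$ is a generic isomorphism (both have generic rank one over $R/xR$), so $1\in R/xR$ maps to a minimal generator, and the quotient $k$ contributes a second minimal generator. Therefore the minimal presentation is $R^{2}\twoheadrightarrow\omega_{R/xR}$ with kernel $\Omega_{R}$. Localizing at a minimal prime $\fp$ of $R$, the $R$-module $\omega_{R/xR}$ vanishes (since $x\notin\fp$), and the sequence localizes to $(\Omega_{R})_{\fp}\cong R_{\fp}^{2}$, yielding rank two.

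The main obstacle, which I expect to occupy most of the proof, is showing $\depth(\Omega_{R})=d$. I will chase the long exact sequence of local cohomology coming from $0\to\Omega_{R}\to R^{2}\to\omega_{R/xR}\to 0$. Since $H^{i}_{\fm}(R)=0$ for $i<d-1$, we immediately get $H^{i}_{\fm}(\Omega_{R})\cong H^{i-1}_{\fm}(\omega_{R/xR})$ for $i\le d-1$, which reduces the question to controlling $H^{i}_{\fm}(\omega_{R/xR})$ for $i<d-1$ and, at the top, analyzing the connecting map $H^{d-1}_{\fm}(R^{2})\to H^{d-1}_{\fm}(\omega_{R/xR})$. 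Here Lemma \ref{LemmaExtOfCanonicalModule} (applied to $A=R/xR$, using $\dim A=d-1$, $H^{d-2}_{\fm}(R/xR)=k$, and $\mu(\omega_{R/xR})=2$) becomes the central tool: its exact sequence ties $\Ext^{i}_{R/xR}(\omega_{R/xR},\omega_{R/xR})^{\vee}$ to $H^{d-2}_{\fm}(\omega_{R/xR})$ and $H^{d-2}_{\fm}(R/xR)\otimes\omega_{R/xR}\cong k^{2}$, letting one pin down the missing cohomology groups and verify the injectivity of the map $H^{d-1}_{\fm}(R^{2})\to H^{d-1}_{\fm}(\omega_{R/xR})$ needed to kill $H^{d-1}_{\fm}(\Omega_{R})$. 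Combining all vanishings concludes that $\depth(\Omega_{R})=d$, so $\Omega_{R}$ is a maximal Cohen-Macaulay $R$-module of rank two.
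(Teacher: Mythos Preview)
Your opening claim that $R$ itself is Buchsbaum with $H^{d-1}_{\fm}(R)\cong k$ is not justified, and in fact it is false in general. The binomial identity $H^{j}_{\fm}(R/\mathbf{y}R)\cong\bigoplus_{i}\binom{n}{i}H^{j+i}_{\fm}(R)$ goes the wrong way: it is a consequence of $R$ being Buchsbaum, not of $R/\mathbf{y}R$ being Buchsbaum. Knowing that $\fm$ kills $H^{j}_{\fm}(R/\mathbf{y}R)$ tells you nothing about whether $y_i$ kills $H^{j}_{\fm}\big(R/(y_1,\dots,y_{i-1})\big)$. What one actually gets from the long exact sequences (and this is what the paper uses) is only
\[
H^{2}_{\fm}(R/\mathbf{y}R)\;\cong\;0:_{H^{d-1}_{\fm}(R)}\mathbf{y},
\]
so $H^{d-1}_{\fm}(R)$ merely has a length-one socle along $\mathbf{y}$; its total length may well be infinite. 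Concretely, the Imtiaz--Schenzel ring discussed in Remark~\ref{RemarkExamples}(ii) is a $5$-dimensional $R$ satisfying every hypothesis of the theorem but which is not even generalized Cohen--Macaulay, so $H^{4}_{\fm}(R)$ has infinite length. Everything downstream in your outline---the short exact sequence $0\to R/xR\to\omega_{R/xR}\to k\to 0$, the value $\mu_R(\omega_{R/xR})=2$, the rank computation, and the cohomology chase---rests on this false length-one claim and therefore collapses.

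The paper circumvents this by \emph{reducing to dimension $3$ first}: it shows that $\mathbf{y}$ is a regular sequence on $\omega_{R/xR}$ with $\omega_{R/xR}/\mathbf{y}\,\omega_{R/xR}\cong\omega_{R/(x,\mathbf{y})}$ (this step uses the quasi-Gorenstein hypothesis on the intermediate quotients in an essential way, via attached primes of local cohomology), so that $\Omega_R/\mathbf{y}\Omega_R\cong\Omega_{R/\mathbf{y}R}$ and it suffices to treat the $3$-dimensional Buchsbaum ring $R/\mathbf{y}R$. In dimension $3$ one genuinely has $H^{2}_{\fm}\cong k$, the $S_2$-ification $S$ of $R/xR$ is a $2$-dimensional Gorenstein ring isomorphic to $\omega_{R/xR}$, and the delicate point---which your final paragraph only gestures at---is the vanishing $\Ext^{1}_{R/xR}(S,R/xR)=0$. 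The paper obtains this by a careful analysis of $\Hom_{R/xR}(S,-)$ applied to $0\to R/xR\to S\to k\to 0$, proving that $\Hom(\id_S,\pi)$ is surjective; Lemma~\ref{LemmaExtOfCanonicalModule} enters only to kill $\Ext^{1}_{R/xR}(S,S)$ at the end of that long exact sequence. Your invocation of that lemma as ``the central tool'' does not substitute for this argument.
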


\begin{proof}
 We first aim to find an element $x$ as required in the statement. 

\textbf{Step 1 (existence of element $x$ as in the statement)}: Since $R$ and $R/\mathbf{y}R$ are quasi-Gorenstein by our hypothesis, they are unmixed rings, because $R$ and $R/\mathbf{y}R$ are $S_2$ by \cite[(1.10)]{AoyamaSomeBasic} and then the desired unmixedness follows from \cite[Lemma 1.1]{AoyamaGotOnTheEndomorphism}. This implies that $(\mathbf{y})$ is an unmixed ideal of $R$, i.e. $\codim(\mathbf{y}R)=\codim(\fp)$ for each $\fp\in \Ass(\mathbf{y}R)$. We show that $$0:_RH^{\dim(R)-1}_{\fm}(R)\nsubseteq \bigcup\limits_{\fp\in \Ass(\mathbf{y}R)}\fp,$$ from which we deduce the existence of an element $x \in 0:_RH^{\dim(R)-1}_{\fm}(R)$ such that $x,\mathbf{y}$  is a regular sequence. Suppose to the contrary that
$$
0:_RH^{\dim(R)-1}_{\fm}(R)\subseteq \fp
$$
for some $\fp\in \Ass(\mathbf{y}R)$. Then there exists $\fq\in \Att\big(H^{\dim(R)-1}_{\fm}(R)\big)$ such that $\fq\subseteq \fp$ (\cite[Proposition 7.2.11]{BrodmannSharpLocal}). Thus,
$$
\fq R_\fp\in \Att_{R_\fp}\big(H^{\dim(R)-\dim(R/\fp)-1}_{\fp R_{\fp}}(R_{\fp})\big)=\Att_{R_\fp}\big(H^{\dim(R_{\fp})-1}_{\fp R_{\fp}}(R_{\fp})\big),
$$
where the containment holds by \cite[Theorem 3.7]{NhanQuyAttached} 
%\cite[Exercise 11.2.8(i)]{BrodmannSharpLocal}) 
and the equality holds by \cite[Lemma 2, page 250]{Matsumura}. But $R_\fp$ is a Gorenstein ring, because  $\codim(\mathbf{y}R)=\codim(\fp)$ and $\mathbf{y}$ is a regular sequence, and we get a contradiction with   $H^{\dim(R_{\fp})-1}_{\fp R_{\fp}}(R_{\fp})=0$.

			%In view of \cite[Theorem 8.1.1(b)]{BrunsHerzogCohenMacaulay}, $\text{ht}\big(0:_RH^{\dim(R)-1}_{\fm}(R)\big)>0$ because $R$ is $S_2$ ($R\cong \omega_R$) and therefore it is  unmixed (\cite[Lemma 1.1]{AoyamaGotOnTheEndomorphism}). In particular, $0:_RH^{\dim(R)-1}_{\fm}(R)$ contains a regular element $x$ as claimed in the statement.

For the rest of the proof, we may and do assume that $R$ is complete.

\textbf{Step 2 (reduction to the case of dimension $3$ as in the statement)}: We show that  the statement  reduces readily to the case where $R$ is a $3$-dimensional Buchsbaum ring as in the statement. Note that $\mathbf{y}$ forms a regular sequence on $R/xR$ by our choice of $x$. We aim to prove that   $\mathbf{y}$ forms a regular sequence on $\omega_{R/xR}$ and that $\omega_{R/xR}/\mathbf{y}\omega_{R/xR}\cong \omega_{R/(x,\mathbf{y})}$. Then the regularity of $\mathbf{y}$ on $\omega_{R/xR}$ implies that  $\mathbf{y}$ is a regular sequence on $\Omega_R$ as well. It follows from the isomorphism $\omega_{R/xR}/\mathbf{y}\omega_{R/xR}\cong \omega_{R/(x,\mathbf{y})}$ as well as \cite[Proposition 1.1.5]{BrunsHerzogCohenMacaulay} that we have $\Omega_{R/\mathbf{y}R}\cong \Omega_R/\mathbf{y}\Omega_R$, where $\Omega_{R/\mathbf{y}R}$ denotes the first syzygy of $\omega_{R/(x,\mathbf{y})}$ in the minimal free resolution of $\omega_{R/(x,\mathbf{y})}$ over $R/\mathbf{y}R$. It turns out that if the statement holds for the quasi-Gorenstein Buchsbaum ring $R/\mathbf{y}R$ as in the statement, then $\Omega_{R}$ is a maximal Cohen-Macaulay $R$-module. Hence the proof of this step is completed once we can show that  $\mathbf{y}$ forms  a regular sequence on $\omega_{R/xR}$ and   $\omega_{R/xR}/\mathbf{y}\omega_{R/xR}\cong \omega_{R/(x,\mathbf{y})}$.

Let $\mathbf{y}=y_1,\ldots,y_h$ and $d=\dim(R)$. We  notice that $\depth(R)=\dim(R)-1$. Namely, $R/\textbf{y}R$ is a 3-dimensional quasi-Gorenstein ring. On the other hand, since any quasi-Gorenstien local ring is $(S_2)$ and $H^2_{\fm}(R/\mathbf{y}R)\cong k\neq 0$ by our hypothesis, it follows that $R/\mathbf{y}R$ is not Cohen-Macaulay.

%claim that $\depth(R)=\dim(R)-1$. Indeed, $R/\textbf{y}R$ is a 3-dimensional quasi-Gorenstein ring and any quasi-Gorenstien local ring is $(S_2)$. As we are assuming that $R$ is not Cohen-Macaulay and $\mathbf{y}$ is a regular sequence, we have the desired claim.
 
Using a standard argument based on the long exact sequence of local cohomologies, it is easily seen that
\begin{align}
\label{EquationLocalCohomologyInvaraintUnderQuotient}
H^{d-2}_{\fm}(R/xR)\cong H^{d-1}_{\fm}(R)~\mbox{for}~x \in 0:{H^{d-1}_{\fm}(R)}, 
\end{align}
that
\begin{equation}\label{EquationLocalCohomologOfQuotientIsAnnihilatorOfLocalCohomology}
\forall\  1\le i\le h,\ \  H^{d-i}_{\fm}\big(R/(y_1,\ldots,y_{i-1})\big)=0:_{H^{d-1}_{\fm}(R)}(y_1,\ldots,y_{i-1}),
\end{equation}
and similarly that
\begin{align}
\label{EquationLocalCohomologOfQuotientIsAnnihilatorOfLocalCohomology2}
\nonumber \forall\  1\le i\le h,\ \  H^{d-i-1}_{\fm}\big(R/(x,y_1,\ldots,y_{i-1})\big)&=0:_{H^{d-2}_{\fm}(R/xR)}(y_1,\ldots,y_{i-1})
&\\& =0:_{H^{d-1}_{\fm}(R)}(y_1,\ldots,y_{i-1}), && (\text{by (\ref{EquationLocalCohomologyInvaraintUnderQuotient})}).
\end{align}

Thus from our hypothesis, we get 
\begin{align}
\label{EquationQuasiGorensteinRole}
\ \forall\  1\le i\le h,\ \  y_i&\notin \bigcup\limits_{\fp\in \Att\Big(H^{d-i}_{\fm}\big(R/(y_1,\ldots,y_{i-1})\big)\Big)}\fp, && (\text{by \cite[Corollary 2.8]{TavanfarTousiAStudy}, \cite[Corollaire (5.12.4)]{GrothendieckEGAIV}})
\nonumber &\\&\overset{}{=}\bigcup\limits_{\fp\in \Att\big(0:_{H^{d-1}_{\fm}(R)}(y_1,\ldots,y_{i-1})\big)}\fp, && (\text{by (\ref{EquationLocalCohomologOfQuotientIsAnnihilatorOfLocalCohomology})})
			  \nonumber &\\&=\bigcup\limits_{\fp\in \Att\Big(H^{d-1-i}_{\fm}\big(R/(x,y_1,\ldots,y_{i-1})\big)\Big)}\fp, && (\text{by (\ref{EquationLocalCohomologOfQuotientIsAnnihilatorOfLocalCohomology2})}).\end{align}

Thus, a repeated use of \cite[Theorem 2.7]{TavanfarTousiAStudy} implies that $$\forall\ 1\le i\le h, \ \ \omega_{R/xR}/(y_1,\ldots,y_i)\omega_{R/xR}\cong \omega_{R/(x,y_1,\ldots,y_i)}$$ and for each $0\le i\le h-1$, the element $y_{i+1}$ is  regular on $\omega_{R/xR}/(y_1,\ldots,y_i)\omega_{R/xR}$ by \cite[(1.10)]{AoyamaSomeBasic}. So the proof of Step 2 is complete.

Henceforth, we assume that $R$ is a complete $3$-dimensional quasi-Gorenstein Buchsbaum ring of $I$-invariant $1$ (i.e. $\mathbf{y}$ is the empty sequence). Let $$S:=\Hom_{R/xR}(\omega_{R/xR},\omega_{R/xR})$$ be the $S_2$-ification of $R/xR$ which is a (possibly semi-local) Gorenstein ring by \cite[Theorem 2.6]{TavanfarTousiAStudy} (any $2$-dimensional quasi-Gorenstein ring is Gorenstein), and in view of \cite[Theorem 12.3.10(v) and Theorem 2.2.6(c)]{BrodmannSharpLocal}, it fits into the exact sequence of $R/xR$-modules
\begin{equation}        
\label{EquationExactSequenceS2ification}
0\rightarrow R/xR\overset{f}{\rightarrow} S\overset{\pi}{\rightarrow} H^1_{\fm}(R/xR)\big(\cong H^2_{\fm}(R)\cong k\big)\rightarrow 0.
\end{equation}
As $\omega_{S_\fp}\cong (\omega_{R/xR})_{\fp}$ for each $\fp\in \Spec(S)$ (\cite[Theorem 3.2(4)]{AoyamaSomeBasic}), from the Gorensteinness of $S$ as well as \cite[Lemma 1.4.4]{BrunsHerzogCohenMacaulay} we conclude that $\omega_{R/xR}\cong S$ as $R,R/xR$ and $S$-modules. In particular,  $\omega_{R/xR}$ is generated minimally by $2$ elements in view of (\ref{EquationExactSequenceS2ification}). Notice that $\omega_{R/xR}$ is a faithful $R/xR$-module, so it is not cyclic. Otherwise $\omega_{R/xR}$ would be a free $R/xR$-module, i.e. $R/xR$  and so $R$ would be Gorenstein violating the non-Cohen-Macaulayness of $R$ ($R$ is $(S_2)$, so $R/xR$ is unmixed and $\omega_{R/xR}$ is faithful by \cite[Theorem 12.1.15]{BrodmannSharpLocal}). 

\textbf{Step 3}: $\Ext^2_R(S,R)\overset{\text{\cite[Lemma 2, page 140]{Matsumura}}}{\cong} \Ext^1_{R/xR}(S,R/xR)=0:$

\vspace{2mm}
Applying $\Hom_{R/xR}(-,R/xR)$ to (\ref{EquationExactSequenceS2ification}) we get the embedding 
\begin{equation}
\label{EquationEmbedding}
\Hom_{R/xR}(S,R/xR)\overset{\Hom(f,\id_{R/xR})}{\hookrightarrow} \Hom_{R/xR}(R/xR,R/xR) \cong R/xR,
\end{equation}
which we claim that it is not surjective.

If otherwise, we get some isomorphism $S^{**}\overset{\cong}{\rightarrow} R/xR$, where $(-)^{**}$ denotes the double $R/xR$-dual. Composing this isomorphism with the natural biduality map $S\rightarrow S^{**}$, we get an $R/xR$-monomorphism $h:S\rightarrow R/xR$ such that $h$ is an isomorphism on the punctured spectrum of $R/xR$, because $R$ and $R/xR$ are Gorenstein on their punctured spectrums (the injectivity of $h$ follows from \cite[Proposition 1.4.1(a)]{BrunsHerzogCohenMacaulay}). Then, the multiplicative map $h\circ f:R/xR\rightarrow R/xR$ must be an isomorphism on the punctured spectrum of $R/xR$.  Since $R/xR$ has dimension $2$, $h\circ f$ has to be a multiplication by a unit and therefore an isomorphism. However, then the exact sequence (\ref{EquationExactSequenceS2ification}) would split and $k$ would be a direct summand of $S$ which is a contradiction. It follows that the map in (\ref{EquationEmbedding}) is not surjective as was claimed. Since $\Hom(f,\id_{R/xR})$ is not surjective as just shown, we have
\begin{equation}
\label{EquationMapIsZeroModuleKFirstVersion}
\im\big(\Hom(f,\id_{R/xR})\big)\subseteq \fm \Hom_{R/xR}(R/xR,R/xR).\end{equation}

Next by applying $\Hom_{R/xR}(S,-)$ to the exact sequence (\ref{EquationExactSequenceS2ification}), we can take into account the map $\Hom(\id_S,f):\Hom_{R/xR}(S,R/xR)\rightarrow \Hom_{R/xR}(S,S)$ for which we claim that
\begin{equation}
\label{EquationMapIsZeroModuloK}
\im\big(\Hom_{R/xR}(S,R/xR)\overset{\Hom(\id_S,f)}{\longrightarrow} \Hom_{R/xR}(S,S)\big)\subseteq \fm\Hom_{R/xR}(S,S).
\end{equation}

In order to certify the validity of $(\ref{EquationMapIsZeroModuloK})$, we take into account the triangle 
$$
\xymatrix{\Hom_{R/xR}(S,R/xR)\ar[rrr]^{\Hom(\id_S,f)} \ar[drrr]_{\Hom(f,\id_{R/xR})} &&& \Hom_{R/xR}(S,S)\\ &&& \Hom_{R/xR}(R/xR,R/xR) \ar[u]_\theta}
$$
where $\theta$ is defined by $r\id_{R/xR}\mapsto r\id_{S}$. In order to conclude (\ref{EquationMapIsZeroModuloK}) from (\ref{EquationMapIsZeroModuleKFirstVersion}) it suffices to show that the above triangle is commutative. We notice that $\Hom(f,\id_{R/xR})(\varphi)=\varphi \circ f$ while $\Hom(\id_{S},f)(\varphi)=f\circ \varphi$. Let $T$ be the multiplicative closed subset of $R/xR$ consisting of regular elements of $R/xR$, so $T^{-1}(R/xR)$ is the total ring of fractions of $R/xR$. Since  $T^{-1}(f)$ is an isomorphism it is readily checked that  the triangle becomes commutative after localizing at $T$ (by representing $T^{-1}(\varphi)$ ($\varphi\in \Hom_{R/xR}(S,R/xR)$) as $T^{-1}(\varphi)=\alpha\id_{T^{-1}(R/xR)}\circ \big(T^{-1}(f)\big)^{-1}$ for some $\alpha\in T^{-1}(R/xR)$). So the statement follows from the fact that $S$ has no torsion elements. It follows that (\ref{EquationMapIsZeroModuloK}) holds.

Tensoring the  exact sequence  $$0\rightarrow \Hom_{R/xR}(S,R/xR)\overset{\Hom(\id_S,f)}{\rightarrow} \Hom_{R/xR}(S,S)\rightarrow N:=\im\big(\Hom(\id_S,\pi)\big)\big(\subseteq \Hom_{R/xR}(S,k)\big)\rightarrow 0$$ (obtained from (\ref{EquationExactSequenceS2ification})) with $k$ yields the sequence 
$$
\Hom_{R/xR}(S,R/xR)/\fm \Hom_{R/xR}(S,R/xR)\overset{\nu=0}{\rightarrow} \Hom_{R/xR}(S,S)/\fm\Hom_{R/xR}(S,S)\overset{\zeta}{\twoheadrightarrow} N\overset{\delta}{\hookrightarrow} \Hom_{R/xR}(S,k),
$$  
where $\ker(\zeta)=\im(\nu)$, $\zeta$ is surjective, $\delta$ is injective because  the inclusion of $k$-vector spaces $N\subseteq \Hom_{R/xR}(S,k)$ remains injective (remains unchanged) after tensoring to $k$,  and  finally $\nu=0$ in view of (\ref{EquationMapIsZeroModuloK}). It follows that $\delta\circ \zeta$ is a monomorphism of $k$-vector spaces. But then the map $\delta\circ \zeta$ has to be an isomorphism, because its source and target are $2$-dimensional $k$-vector spaces, as $S\cong \omega_S\cong \omega_{R/xR}$ is minimally generated by $2$ elements over $R/xR$ and so is $\Hom_{R/xR}(S,S)\cong \Hom_{R/xR}(\omega_{R/xR},\omega_{R/xR})=S$. The surjectivity of $\delta\circ \zeta$ implies that
\begin{equation}
\label{EquationSurjectivity}
\Hom_{R/xR}(\id_S,\pi):\Hom_{R/xR}(S,S)\rightarrow \Hom_{R/xR}(S,k)
\end{equation}  
is surjective. From this surjectivity  in conjunction with the long exact sequence
$$
0\rightarrow \Hom_{R/xR}(S,R/xR)\rightarrow \Hom_{R/xR}(S,S) \rightarrow \Hom_{R/xR}(S,k)\rightarrow \Ext^1_{R/xR}(S,R/xR)\rightarrow \underset{0}{\underbrace{\Ext^1_{R/xR}(S,S)}}
$$
arising from (\ref{EquationExactSequenceS2ification}), we conclude that 
$\Ext^1_{R/xR}(S,R/xR)=0$. We recall that the vanishing of the last Ext module in the above exact sequence follows from $S\cong \omega_{R/xR}$ as well as Lemma \ref{LemmaExtOfCanonicalModule}\ref{itm:LemmaExtOfCanonicalModuleVanishing}.
			
\textbf{Step 4 (the final step)}: Let
$$
({D}^\bullet,\partial^{\bullet}):=0\rightarrow D^0\overset{\partial^0}{\rightarrow} D^1\overset{\partial^1}{\rightarrow} D^2\overset{\partial^3}{\rightarrow} D^3\rightarrow 0
$$
be the normalized dualizing complex of $R$ (see \cite[Definition 11.2.1]{SchenzelSimonCompletion} for the definition of the dualizing complex and  \cite[11.4.6 and Example 11.4.7]{SchenzelSimonCompletion} for the notion of the normalized dualizing complex), and let
$$
\tau^0{D}^\bullet:=0\rightarrow \im\partial^{0}\rightarrow D^1\rightarrow D^2\rightarrow D^3\rightarrow 0
$$
be the soft truncation of $D^\bullet$ which thus fits into an exact sequence $0\rightarrow R(\cong \omega_R)\rightarrow {D}^\bullet\rightarrow \tau^{0}{D}^\bullet\rightarrow 0$ of complexes of $R$-modules (by \cite[Corollary 12.2.3]{SchenzelSimonCompletion} $\omega_R\cong H^0(D^\bullet)=\ker(\partial^{0})$). This exact sequence  and  $0\rightarrow \Omega_R\rightarrow R^2\rightarrow S(\cong \omega_{R/xR})\rightarrow 0$ provide us with the diagram
\begin{center}
$\begin{CD}
@. 0 @. 0 @. 0\\
@.  @AAA  @AAA  @AAA \\
0 @>>> R\Hom_R(\Omega_R,R) @>>> R\Hom_R(\Omega_R,{D}^\bullet) @>>> R\Hom_R(\Omega_R,\tau^{0}{D}^\bullet)@>>> 0\\
@.  @AAA  @AAA  @AAA \\
0 @>>> R\Hom_R(R^2,R) @>>> R\Hom_R(R^2,{D}^\bullet) @>>> R\Hom_R(R^2,\tau^{0}{D}^\bullet)@>>> 0\\
@.  @AAA  @AAA  @AAA \\
0 @>>> R\Hom_R(S,R) @>>> R\Hom_R(S,{D}^\bullet) @>>> R\Hom_R(S,\tau^{0}{D}^\bullet)@>>> 0\\
@.  @AAA  @AAA  @AAA \\
@. 0 @. 0 @. 0
\end{CD}$
\end{center}
whose rows and columns are exact sequence of complexes. After taking cohomology, we get the diagram 
\begin{center}
$\begin{CD}
@. \underset{=0\text{\ (depth$(S)=2$)}}{\underbrace{H^1_{\fm}(S)^\vee=H^2\big(R\Hom_R(S,{D}^\bullet)\big)}}\\
@. @AAA\\
@. H^{2}_\fm(\Omega_R)^\vee=H^1\big(R\Hom_R(\Omega_R,{D}^\bullet)\big)\\
@. @AAA\\
\underset{=0}{\underbrace{\Ext^1_R(R^2,R)}} @>>> H^2_{\fm}(R)^\vee\oplus H^2_{\fm}(R)^\vee @>\zeta_2>\cong>   H^1\big(R\Hom(R^2,\tau^0{D}^\bullet)\big)@>>>\underset{=0}{\underbrace{\Ext_R^2(R^2,R)}}\\
@. @A\zeta_1AA @A\zeta_4AA\\
@. S\cong \omega_S\cong H^1\big(R\Hom_R(S,{D}^\bullet)\big) @>\zeta_3>>  H^1(R\Hom_R(S,\tau^0{D}^\bullet)) @>>> \underset{=0\text{\ (Step 3)}}{\underbrace{\Ext_R^2(S,R)}}
\end{CD}$
\end{center}
with exact rows and columns (the long column holds by Local Duality; see \cite[Theorem 12.2.1]{SchenzelSimonCompletion}). To finish the proof, it suffices to show that $\zeta_1$ is surjective, because then we get $H^2_{\fm}(\Omega_R)=0$ while we already know that $\depth(\Omega_R)\ge 2$. In view of the above diagram, the surjectivity of $\zeta_1$ follows from the surjectivity of $\zeta_4$. We will show that $\zeta_4$ is surjective and then the proof is complete.

By Local Duality, we have $\tau^0D^\bullet\simeq (H^{2}_{\fm}(R)^\vee)^{[-1]} =k^{[-1]}$ in the derived category of $R$, where $[-1]$ denotes the complex shift for ascending complexes as defined in \cite[1.1.1 Terminology about complexes]{SchenzelSimonCompletion}. Consequently,
\begin{align*}
\coker(\zeta_4)&=\coker\Big(H^1\big(R\Hom_R(S,\tau^0D^\bullet)\big)\rightarrow H^1\big(R\Hom_R(R^2,\tau^0D^\bullet)\big)\Big) 
\\&\cong\coker\Big(H^1\big(R\Hom_R(S, k^{[-1]})\big)\rightarrow H^1\big(R\Hom_R(R^2, k^{[-1]})\big)\Big) 
\\&\cong \coker\big(\Hom_R(S,k)\rightarrow \Hom(R^2,k)\big)
\end{align*}
which vanishes, because $R^2\twoheadrightarrow S$ becomes an isomorphism after tensoring to $k$ as $S$ is minimally generated by $2$ elements.
\end{proof}

We end this section by the following remarks to Theorem \ref{TheoremMaximalCohenMacaulayModule}.

\begin{remark}
We make comments on the importance of the conditions imposed in Theorem \ref{TheoremMaximalCohenMacaulayModule} for the Cohen-Macaulayness of  $\Omega_R$.
\begin{enumerate}[(i)] 
\item
The ``quasi-Gorenstein deformation" condition in Theorem \ref{TheoremMaximalCohenMacaulayModule} may not be replaced by an arbitrary deformation in general. Firstly, the quasi-Gorenstein property does not deform in general by \cite{ShimomotoTaniguchiTavanfar}. Therefore, an arbitrary deformation of a quasi-Gorenstein Buchsbaum ring as in Theorem \ref{TheoremMaximalCohenMacaulayModule} is not necessarily quasi-Gorenstein. Secondly, the quasi-Gorenstein property of the deformations in Theorem \ref{TheoremMaximalCohenMacaulayModule} plays an important role in proving the validity of the display (\ref{EquationQuasiGorensteinRole}) in the proof of   Step 2.

\item
The ``dimension $3$" condition for the quasi-Gorenstein Buchsbaum ring $R/\mathbf{y}R$ in Theorem \ref{TheoremMaximalCohenMacaulayModule} seems to be necessary, because without this condition neither the canonical module of $R/(\mathbf{y},x)R$ nor (equivalently)  its $S_2$-ification is Cohen-Macaulay.

\item
The ``$I$-invariant $1$" condition in Theorem \ref{TheoremMaximalCohenMacaulayModule} is also essential, because without this assumption, the conclusion of Step 3 in the proof does not hold in general. Namely there is a Buchsbaum quasi-Gorenstein ring of $I$-invariant $2$ and dimension $3$ such that $\Ext^2_R(S,R)\neq 0$, where $S$ is the $S_2$-ification of $R/xR$ for some (probably any) non-zero divisor $x$ of $R$. However, certain $3$-dimensional quasi-Gorenstein Buchsbaum rings with $I$-invariant $2$ do admit a maximal Cohen-Macaulay module; for instance, certain section rings of an Abelian surface over an algebraically closed field of any characteristic (see \cite{ShimomotoTavanfarOnLocalRings}).
\end{enumerate}
\end{remark}

\begin{remark}
\label{RemarkExamples}
We mention prime characteristic  instances of unique factorization domains satisfying the assumptions of Theorem \ref{TheoremMaximalCohenMacaulayModule}. At the time of writing this paper, the authors do not know any example of an equicharacteristic zero ring satisfying the assumptions of Theorem \ref{TheoremMaximalCohenMacaulayModule}, while  there are instances of equicharacteristic zero $3$-dimensional quasi-Gorenstein Buchsbaum rings with $I$-invariant $2$, for example a variant of  \cite[Example 5.1(2)]{ShimomotoTaniguchiTavanfar} or any generalized section ring of an Abelian surface over the complex numbers, or \cite[Example, page 624]{HerrmannTrungExamples}. Anyway, such an example, if it exists in equal characteristic zero, indeed can not occur as a $3$-dimensional complete unique factorization domain with algebraically closed residue field of characteristic zero (in fact of any $I$-invariant), becuase such a ring has to be Cohen-Macaulay by \cite[Theorem 2.5]{HartshorneOgusOnTheFactoriality}.

\begin{enumerate}[(i)]
\item
One instance of rings satisfying the assumptions of Theorem \ref{TheoremMaximalCohenMacaulayModule} is constructed by Marcelo and Schenzel in \cite[Theorem 2.5]{MarceloSchenzelNonCMUFDs}, which is a unique factorization domain of dimension $3$ in characteristic $2$. Apparently, the defining ideal of the ring $A$ in \cite[Theorem 2.5]{MarceloSchenzelNonCMUFDs} is not homogeneous and therefore (if it is so) the fact that this ring satisfies the small Cohen-Macaulay Conjecture does not follow from Theorem \ref{Hochster} or Proposition \ref{PropositionAlternativeProof1} (the defining ideal is explicitly given in the paragraph before \cite[Theorem 2.5]{MarceloSchenzelNonCMUFDs}).

\item
By Macaulay2 computations, we have checked that the Imtiaz-Schenzel construction in \cite[Theorem 6.4]{ImtiazSchenzel}, which is a graded ring whose defining ideal is explicitly given in its preceding paragraph, provides another instance of rings adhering the assumptions of Theorem \ref{TheoremMaximalCohenMacaulayModule}. The Imtiaz-Schenzel ring is a unique factorization domain of dimension $5$ and of characteristic $2$. Moreover, this ring   is not generalized Cohen-Macaulay and so the fact that it adheres the small Cohen-Macaulay conjecture does not follow from Theorem \ref{Hochster} (or Proposition \ref{PropositionAlternativeProof1}). It is asked as a question in \cite{ImtiazSchenzel} that whether the authors' construction gives such non-Cohen-Macaulay unique factorizations in other characteristics or not (see the paragraph after \cite[Theorem 1.2]{ImtiazSchenzel}). 

\item
While the constructions in the previous parts were done by factorial closure, invariant theory can also provide further examples. Namely, using Macaulay2 program, we checked that the famous Bertin's example of a non-Cohen-Macaulay unique factorization domain in characteristic $2$ (\cite[Example 16.8]{FossumTheDivisor}) is also a $4$-dimensional unique factorization domain that is an invariant subring of a regular ring satisfying the assumptions of Theorem \ref{TheoremMaximalCohenMacaulayModule}. The Bertin's example is not generalized Cohen-Macaulay.  However, since this is an invariant ring, it admits even a small Cohen-Macaulay algebra.
\end{enumerate}	
\end{remark}

\section{Construction of maximal Cohen-Macaulay modules via the Frobenius map} \label{SectionCMDirectSummandsOFFrobeniusDirectImages}

One trick used in the proof of Theorem \ref{Hochster}\ref{itm:SomeVeroneseDirectSummandIsCM} is the following. If the residue field is perfect, we have
$$
\ell_R(H^i_\fm(M))=\ell_R(F_*^e(H^i_\fm(M)))=\ell_R(H^i_\fm(F^e_*(M)).
$$
This is important, because the similar length computation is valid and useful for non-graded local rings with prefect residue field. On the other hand, by arguing with   maximum/minimum degree among non-zero components instead of length identities,  one  can relax the perfectness assumption in the hypothesis of Theorem \ref{Hochster}\ref{itm:SomeVeroneseDirectSummandIsCM}.

\begin{proposition}
\label{PropositionAlternativeProof1}
Let $(R,\fm)$ be an $F$-finite  $\mathbb{Z}$-graded local domain and let $M$ be a non-zero generalized Cohen-Macaulay $\mathbb{Z}$-graded $R$-module that is torsion-free (equivalently, $\fm\notin \Ass_R(M)$ and $\dim (M)=\dim (R)$). Then some Veronese module $M^{(p^e,i)}$ is a maximal Cohen-Macaulay $R$-module. 
\end{proposition}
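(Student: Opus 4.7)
The plan is to replace the length identity $\ell_R(F^e_*(H^i_\fm(M)))=\ell_R(H^i_\fm(M))$ used in Hochster's proof (which relies on perfectness of the residue field) by a pigeonhole argument on graded degrees, which is available precisely because we are in the $\mathbb{Z}$-graded setting.

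Set $d:=\dim R=\dim M$. Since $M$ is generalized Cohen–Macaulay, each $H^j_\fm(M)$ with $0\le j\le d-1$ has finite length and is $\mathbb{Z}$-graded, hence supported only in finitely many degrees. I would let $S\subseteq\mathbb{Z}$ denote the (finite) set of integers $n$ for which $H^j_\fm(M)_{[n]}\neq 0$ for some $j<d$. By Remark~\ref{RemarkDifferentPerspective}\ref{itm:VeroneseSubmoduleDirectSummandOfFrobeniusDirectImage} there is a graded $R$-module decomposition $F^e_*(M)=\bigoplus_{i=0}^{p^e-1}M^{(p^e,i)}$. Because Frobenius restriction of scalars is exact and $\fm$ and $\fm^{[p^e]}$ have the same radical, local cohomology distributes compatibly with this decomposition, giving a graded isomorphism
$$
\bigoplus_{i=0}^{p^e-1}H^j_\fm\big(M^{(p^e,i)}\big)\;\cong\;H^j_\fm\big(F^e_*(M)\big)\;\cong\;F^e_*\big(H^j_\fm(M)\big),
$$
where the right-hand side is $\tfrac{1}{p^e}\mathbb{Z}$-graded with degree-$(n/p^e)$ piece $H^j_\fm(M)_{[n]}$. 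Tracking gradings, the piece $H^j_\fm(M)_{[n]}$ on the right contributes exactly to the summand of index $i\equiv n\pmod{p^e}$ on the left.

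Now comes the pigeonhole step. For $j<d$ and any residue $i$ with $i\not\equiv n\pmod{p^e}$ for every $n\in S$, we conclude $H^j_\fm(M^{(p^e,i)})=0$. The set of bad residues $\{n\bmod p^e:n\in S\}$ has cardinality at most $|S|$, a bound independent of $e$. On the other hand, since $M$ is torsion-free over the domain $R$, Remark~\ref{RemarkDifferentPerspective}\ref{itm:MTorsionFreeImpliesGoesToInfinity} guarantees that the number of non-zero Veronese summands $M^{(p^e,i)}$ tends to infinity with $e$. Choosing $e$ so that this number exceeds $|S|$, I can pick a residue $i$ with $M^{(p^e,i)}\neq 0$ and $H^j_\fm(M^{(p^e,i)})=0$ for every $j<d$. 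Then $\depth_R M^{(p^e,i)}\ge d$, and since $M^{(p^e,i)}$ is an $R$-submodule of $F^e_*(M)$ while $\dim_R F^e_*(M)=\dim_R M=d$, it follows that $M^{(p^e,i)}$ is a maximal Cohen–Macaulay $R$-module.

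The only delicate point I expect is book-keeping: one must verify that the identification $H^j_\fm(F^e_*(M))\cong F^e_*(H^j_\fm(M))$ really is an isomorphism of $\tfrac{1}{p^e}\mathbb{Z}$-graded $R$-modules, and that the Veronese decomposition on the left matches the obvious ``mod-$p^e$ degree-residue'' decomposition on the right. Once that compatibility is in place, neither perfectness of the residue field nor any $F$-purity hypothesis is needed, which is the advertised gain over Theorem~\ref{Hochster}\ref{itm:SomeVeroneseDirectSummandIsCM}.
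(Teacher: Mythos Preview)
Your proof is correct and follows essentially the same approach as the paper: both replace Hochster's length identity by the observation that the non-top local cohomologies of $M$ are supported in a fixed finite set of degrees, and that under the Veronese decomposition of $F^e_*(M)$ these degrees hit only finitely many residues modulo $p^e$. The paper packages this slightly more explicitly---it fixes a single $j$ with $M_{[j]}\neq 0$ and $H^i_\fm(M)_{[h]}=0$ for $|h|\ge j$, then takes $e$ with $p^e\ge 2j$ so that the specific summand $M^{(p^e,j)}$ works---whereas you use an abstract pigeonhole count against Remark~\ref{RemarkDifferentPerspective}\ref{itm:MTorsionFreeImpliesGoesToInfinity}; but the underlying mechanism is identical.
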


\begin{proof}
Set $d:=\dim (R)$. If $M$ is torsion-free, then $\Ass_R(M)=\{0\}$ so that $\dim(M)=\dim(R)$ and $\fm\notin \Ass_R(M)$. Since $M$ is generalized Cohen-Macaulay and $R$ is an integral domain, it follows from $\fm\notin \Ass_R(M)$ and $\dim(M)=\dim(R)$ that $\Ass_R(M)=\{0\}$ (i.e. $M$ is torsion-free) by applying \cite[9.5.2 Grothendieck's Finiteness Theorem]{BrodmannSharpLocal}, which yields $\lambda^{\fm}_{\fm}(M)=\dim(R)$.  

% We recall that by \cite[Exercise 9.5.7]{BrodmannSharpLocal} $M$ is torsion-free if and only if $\fm\notin \Ass_R(M)$, as $M$ is generalized Cohen-Macaulay and $R$ is an integral domain.
  Let $\mathbf{x}:=x_1,\ldots,x_n$ be a set of homogeneous elements of $R$ generating an $\mathfrak{m}$-primary ideal. By our hypothesis, there is some $j \in \mathbb{N}$ with $M_{[j]}\neq 0$ (thus $M^{(p^e,j)}\neq 0$ for any $e$ with $p^e>j$) and

\begin{equation}
\label{EquationZeroDegrees}
H^i_{(\mathbf{x}^{p^e})}(M)_{[h]}=H^i\big(\mbox{C}^\bullet(\mathbf{x}^{p^e};M)\big)_{[h]}=0~\mbox{for}~|h|\ge j~\mbox{and}~i<d.
\end{equation}

Note also that $\dim(M^{(p^e,j)})=d$, as $M$ is torsion-free for all $e$ with $p^e>j$. Pick some $e$ with $p^e \ge 2j$; thus

\begin{equation} 
\label{EquationSimpleInequatiy}
\forall\ k \in \mathbb{Z},\ \ |kp^e+j|\ge j. 
\end{equation}
	%The natural map on \v{C}ech complexes
	%$$
	%\mbox{C}^\bullet(\mathbf{x}^{p^e};M)~\mbox{defined by}~\frac{m}{({x_{i_1}}^{p^e}\ldots {x_{i_k}}^{p^e})} \mapsto \frac{m}{({x_{i_1}}^{p^e}\ldots {x_{i_k}}^{p^e})}
	%$$
	%is the identity map (it is surjective as well). 

By Remark \ref{RemarkDifferentPerspective}\ref{itm:VeroneseSubmoduleOverVeronseSubring}, we have the homogeneous decomposition
$$
H^i_{(\mathbf{x}^{p^e})}(M)=\bigoplus_{u=0}^{p^e-1}H^i_{(\mathbf{x}^{p^e})}(M^{(p^e,u)})
$$
into $R^{(p^e)}$-graded direct summands.  In view of the \v{C}ech-complex presentation of the local cohomology as well as (\ref{EquationSimpleInequatiy}),   the graded direct summand $H^i_{(\mathbf{x}^{p^e})}(M^{(p^e,j)})$ of $H^i_{(\mathbf{x}^{p^e})}(M)$ is contained componentwise in $H^i_{(\mathbf{x}^{p^e})}(M)$ among degrees either greater than or equal to $j$, or smaller than or equal to $-j$. Therefore, (\ref{EquationZeroDegrees}) implies that $\bigoplus_{i=0}^{d-1}H^i_{(\mathbf{x}^{p^e})}(M^{(p^e,j)})=0$, that is, $M^{(p^e,j)}$ is a Cohen-Macaulay $R^{(p^e)}$-module. So the statements follows from Remark \ref{RemarkDifferentPerspective}\ref{itm:VeroneseSubmoduleCM}.
\end{proof}

\begin{remark}
The domain hypothesis in the statement of Proposition \ref{PropositionAlternativeProof1} is required for excluding the case allowing the graded ring to have a nilpotent irrelevant ideal.
\end{remark}

\subsection{Cohen-Macaulay sheaves on normal projective varieties}

\begin{definition}
A coherent sheaf $\mathcal{F}$ on a scheme $X$ is a said to be \textit{maximal Cohen-Macaulay}, provided $\mathcal{F}_x$ is a maximal Cohen-Macaulay $\mathcal{O}_{X,x}$-module for any $x \in X$.
\end{definition}

The following lemma is required for Corollary \ref{CorollaryProjectiveVariety} and Corollary \ref{CorollaryVeroneseCMImpliesGeneralizedCohenMacaulay}.   It is noteworthy that Lemma \ref{LemmaMCMSheafToModule} is characteristic-free.

\begin{lemma}
\label{LemmaMCMSheafToModule}
Let $X$ be a normal projective variety of dimension $\ge 2$ over a field $k$ of any characteristic and let $\mathcal{F}$ be a maximal Cohen-Macaulay sheaf on $X$. Let $D$ be an ample $\mathbb{Q}$-Weil divisor on $X$. If $\mathcal{O}_{X}(\lfloor iD\rfloor)$ is an invertible sheaf for each $i\in \mathbb{Z}$, then $\Gamma_*(X,\mathcal{F},D)$ is a generalized Cohen-Macaulay $\mathbb{Z}$-graded $R(X,D)$-module. In particular, $\Gamma_*(X,\mathcal{F},D)$ is generalized Cohen-Macaulay  if either of the following conditions holds:
\begin{enumerate}[(i)]
\item
\label{itm:LocallyFactorial}
$X$ is locally factorial.

\item
\label{itm:GotoWatanabeCondition}
$R(X,D)$ satisfies the  Goto-Watanabe condition $(\#)$ (e.g., if $R(X,D)$ is standard graded).

\item
\label{itm:IntegralAmpleDivisor}
$D$ is an (integral) ample Cartier divisor.
\end{enumerate}
\end{lemma}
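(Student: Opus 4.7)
The plan is to establish that $M := \Gamma_*(X,\mathcal{F},D)$ is generalized Cohen-Macaulay over $R := R(X,D)$ by translating the condition into a vanishing of intermediate sheaf cohomology on $X$, which we attack by Serre vanishing in the positive direction and by Grothendieck-Serre duality in the negative direction. Write $\mathfrak{m} := R_+$ and $n := \dim X$. Since $\mathcal{F}$ is MCM on $X$ with $\dim X \ge 2$, its stalks at closed points of $X$ have depth $\ge 2$, so $\Ass_X(\mathcal{F})$ contains no closed point, and this persists after tensoring with the invertible sheaves $\mathcal{O}_X(\lfloor iD\rfloor)$. First I would apply Remark \ref{RemarkProjToCone}\ref{itm:Grothendieck} and \ref{itm:DepthOfAssociatedGradedModule} to deduce that $M$ is finitely generated with $\depth_\mathfrak{m}(M) \ge 2$, whence $H^0_\mathfrak{m}(M) = H^1_\mathfrak{m}(M) = 0$. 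By Remark \ref{RemarkProjToCone}\ref{itm:LocalCohomolgyVSSheafCohomolgy}, for $2 \le i \le n$ and $m \in \mathbb{Z}$ one has
$$H^i_\mathfrak{m}(M)_{[m]} \cong H^{i-1}\bigl(X,\mathcal{F}\otimes\mathcal{O}_X(\lfloor mD\rfloor)\bigr),$$
each graded component being finite-dimensional over $k$ by Serre's finiteness theorem. Since $R_0=k$, such a graded $\mathfrak{m}$-torsion module has finite length precisely when only finitely many of its graded components are nonzero; hence the problem reduces to showing, for every $1 \le j \le n-1$, the vanishing $H^j(X,\mathcal{F}\otimes\mathcal{O}_X(\lfloor mD\rfloor))=0$ for all $|m|\gg 0$.

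Fix $N\in\mathbb{N}$ such that $ND$ is a very ample (integral) Cartier divisor, which exists by the Cartan-Serre-Grothendieck theorem. The direction $m\gg 0$ is the easy half: writing $m=qN+r$ with $0\le r<N$ gives $\mathcal{O}_X(\lfloor mD\rfloor)\cong\mathcal{O}_X(qND)\otimes\mathcal{O}_X(\lfloor rD\rfloor)$, and classical Serre vanishing applied to each of the finitely many coherent sheaves $\mathcal{F}\otimes\mathcal{O}_X(\lfloor rD\rfloor)$ tensored with large powers of the ample $\mathcal{O}_X(ND)$ delivers the required vanishing. The main obstacle is the direction $m\ll 0$, where Serre vanishing does not apply directly; here one invokes Grothendieck-Serre duality on the (possibly non-Cohen-Macaulay) normal projective variety $X$. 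Denoting by $\omega_X^\bullet$ the normalized dualizing complex on $X$, the MCM property of $\mathcal{F}$ forces $R\mathcal{H}om_{\mathcal{O}_X}(\mathcal{F},\omega_X^\bullet)\simeq\mathcal{F}^\vee[n]$ for a coherent sheaf $\mathcal{F}^\vee$: at each $x \in X$ of codimension $c$, the localization $(\omega_X^\bullet)_x$ is the standard normalized dualizing complex of $\mathcal{O}_{X,x}$ shifted by $[n-c]$, while the MCM property of $\mathcal{F}_x$ places the unique nonzero local Ext into degree $-c$, so the shift uniformly delivers the final answer into degree $-n$ across $X$. Serre duality then yields
$$H^j\bigl(X,\mathcal{F}\otimes\mathcal{O}_X(\lfloor mD\rfloor)\bigr)^\vee \cong H^{n-j}\bigl(X,\mathcal{F}^\vee\otimes\mathcal{O}_X(\lceil -mD\rceil)\bigr)$$
via the identity $-\lfloor mD\rfloor=\lceil -mD\rceil$; applying the same decomposition $-m=qN+r$ to the right-hand side and invoking Serre vanishing kills it whenever $n-j\ge 1$, that is $j\le n-1$. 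Dualizing back finishes the proof that $M$ is generalized Cohen-Macaulay.

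For the three sufficient conditions, it remains to check invertibility of $\mathcal{O}_X(\lfloor iD\rfloor)$ for all $i \in \mathbb{Z}$: (i) if $X$ is locally factorial, every Weil divisor is Cartier, whence invertibility is automatic; (iii) if $D$ is already integral ample Cartier, $\lfloor iD\rfloor = iD$ is Cartier; (ii) under the Goto-Watanabe condition $(\#)$, the Veronese subrings $R^{(d)}$ are standard graded for $d \gg 0$, which forces $\mathcal{O}_X(\lfloor dD\rfloor)$ to be very ample (hence invertible) for all such $d$, and the relation $\mathcal{O}_X(\lfloor iD\rfloor)\cong \mathcal{O}_X(\lfloor (i+M)D\rfloor)\otimes \mathcal{O}_X(-MD)$ for $M$ a sufficiently large multiple of the denominator of $D$ propagates invertibility to every $i \in \mathbb{Z}$.
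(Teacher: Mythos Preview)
Your argument is correct and proceeds along a genuinely different route from the paper's proof. Both proofs begin identically: finite generation and $\depth_{\fm}(M)\ge 2$ come from Remark \ref{RemarkProjToCone}\ref{itm:Grothendieck},\ref{itm:DepthOfAssociatedGradedModule}, and the problem reduces via Remark \ref{RemarkProjToCone}\ref{itm:LocalCohomolgyVSSheafCohomolgy} to the vanishing of $H^j\big(X,\mathcal{F}\otimes\mathcal{O}_X(\lfloor mD\rfloor)\big)$ for $1\le j\le n-1$ and $|m|\gg 0$. From here the approaches diverge. The paper first reduces to a very ample integral divisor by absorbing the finitely many twists $\mathcal{O}_X(\lfloor rD\rfloor)$ ($0\le r\le N-1$) into $\mathcal{F}$, then pushes $\mathcal{F}$ forward along the closed immersion $X\hookrightarrow \mathbb{P}^n$ and invokes Faltings' Annihilator Theorem over the polynomial ring to conclude that $\bigoplus_m H^0\big(\mathbb{P}^n,f_*\mathcal{F}\otimes\mathcal{O}_{\mathbb{P}^n}(m)\big)$ is generalized Cohen--Macaulay; both vanishing directions thus fall out simultaneously from a single commutative-algebra statement. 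You instead treat the two directions separately and stay on $X$: Serre vanishing handles $m\gg 0$, while for $m\ll 0$ you use Grothendieck--Serre duality together with the observation that the MCM hypothesis forces $R\mathcal{H}om_{\mathcal{O}_X}(\mathcal{F},\omega_X^\bullet)$ to be a sheaf in a single cohomological degree, thereby converting the problem back into a Serre-vanishing question for $\mathcal{F}^\vee$. Your approach is more geometric and avoids the detour through $\mathbb{P}^n$ and Faltings' theorem, at the price of invoking the dualizing complex on a possibly non-Cohen--Macaulay $X$; the paper's approach is more elementary in its inputs but less direct. For the sufficient conditions (i)--(iii) your justifications agree with the paper's, except that for (ii) the paper simply cites \cite[Lemma 5.1.2]{GotoWatanabeOnGraded} rather than arguing as you do.
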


\begin{proof}
Note that $M=\Gamma_*(X,\mathcal{F},D)$ is a finitely generated module over $R(X,D)$ by Remark \ref{RemarkProjToCone}\ref{itm:Grothendieck}. In the case   \ref{itm:LocallyFactorial}, every Weil divisor is Cartier, so $\mathcal{O}_{X}(\lfloor iD\rfloor)$ is an invertible sheaf for every $i\in \mathbb{Z}$ (\cite[Proposition 6.11 or Corollary 6.16]{HartshorneAlgebraicGeometry}). In the case \ref{itm:IntegralAmpleDivisor}, we are in the situation of \ref{itm:GotoWatanabeCondition} by Remark \ref{RemarkSectionRingOfAmpleGotoWatanabeCondition}. Finally, in the case where \ref{itm:GotoWatanabeCondition} holds, all $\mathcal{O}_X(\lfloor iD\rfloor)$ are invertible sheaves by \cite[Lemma 5.1.2]{GotoWatanabeOnGraded}. Thus, it suffices to prove the lemma by assuming that all rank $1$ reflexive sheaves $\mathcal{O}_{X}(\lfloor iD \rfloor)$ are invertible sheaves. We divide the proof into two steps.

\underline{Step $1$: $D$ is a very ample integral Cartier divisor.}
In this case, there is a closed immersion $f:X \to \mathbb{P}^n$ such that $\mathcal{O}_X(D) \cong f^*\big(\mathcal{O}_{\mathbb{P}^n}(1)\big)$. Setting $\mathcal{G}:=f_*(\mathcal{F})$, we have
$$
H^i\big(X,\mathcal{F}\otimes\mathcal{O}_X(mD)\big)=H^i\big(\mathbb{P}^n,\mathcal{G}\otimes \mathcal{O}_{\mathbb{P}^n}(m)\big)
$$
for each $m$ and $i$ by \cite[Corollary 3]{KempfSomeElementaryProofs} as well as the Projection Formula (see \cite[Proposition (12.2.3)]{GrothendieckEGAIII} and also \cite[II, Exercise 5.1(d)(Projection Formula)]{HartshorneAlgebraicGeometry}). Note that $\mathcal{G}_{x}$ is a Cohen-Macaulay $\mathcal{O}_{\mathbb{P}^n,x}$-module for each $x\in \Supp(\mathcal{G})$ by our hypothesis, so we claim that
$$
N=\bigoplus_{m\in \mathbb{Z}}H^0\big(\mathbb{P}^n,\mathcal{G}\otimes\mathcal{O}_{\mathbb{P}^n}(m)\big)
$$
has finite length local cohomologies supported at the irrelevant ideal of $k[x_0,\ldots,x_n]$ (excluding the top local cohomology). Indeed for any homogeneous prime ideal $\fp \subset k[x_0,\ldots,x_n]$ not containing $(x_0,\ldots,x_n)$, we have $N_{(\fp)}=(N_{(\fp)})_{[0]}[T,T^{-1}]$ for some degree $1$ invertible element $T \in k[x_0,\ldots,x_n]_{(\fp)}$. The Cohen-Macaulay property of $\mathcal{G}_{x}$ implies the same property on $(N_{(\fp)})_{[0]}$. Thus $N_{(\fp)}$ and so $N_\fp$ are Cohen-Macaulay. This observation together with 
%\cite[Exercise 2.1.27]{BrunsHerzogCohenMacaulay} 
\cite[Graded Annihilator and Finiteness Theorem 14.3.10]{BrodmannSharpLocal}
shows that  $N$ is a generalized Cohen-Macaulay module over $k[x_0,\ldots,x_n]/0:_{k[\mathbf{x}]}N$.
% by \cite[9.5.1 Faltings' annihilator theorem]{BrodmannSharpLocal}. 
So the desired finite length property holds by \cite[4.2.1 Independence Theorem]{BrodmannSharpLocal}. From Remark \ref{RemarkProjToCone}\ref{itm:LocalCohomolgyVSSheafCohomolgy} and \ref{itm:DepthOfAssociatedGradedModule}, we get $$H^0_{R(X,D)_+}\big(\Gamma_*(X,\mathcal{F},D)\big)=0=H^1_{R(X,D))_+}\big(\Gamma_*(X,\mathcal{F},D)\big)$$ and also we get
$$
H^{i+1}_{R(X,D)_{+}}\big(\Gamma_*(X,\mathcal{F},D)\big)_{[m]}=H^i\big(X,\mathcal{F}\otimes \mathcal{O}_{X}(mD)\big)=H^i\big(\mathbb{P}^n,\mathcal{G}\otimes \mathcal{O}_{\mathbb{P}^n}(m)\big)=\big(H^{i+1}_{(x_0,\ldots,x_n)}(N)\big)_{[m]}=0
$$
for $|m|\gg 0$ and $1\le i<\dim (X)$. So $\Gamma_*(X,\mathcal{F},D)$ is generalized Cohen-Macaulay.
	
\underline{Step $2$: the general case.} Pick some $N\in\mathbb{N}$ such that $ND$ is a very ample integral Cartier divisor. Then for each $i\in \mathbb{Z}$, the coherent sheaf $\mathcal{F}\otimes \mathcal{O}_X(\lfloor iD\rfloor)$ is  maximal Cohen-Macaulay, because  so is $\mathcal{F}$ and  $\mathcal{O}_X(\lfloor iD\rfloor)$ is a line bundle by our assumption. Thus applying Step $1$ to each of the maximal Cohen-Macaulay sheaves $\mathcal{F}\otimes \mathcal{O}_X(\lfloor iD\rfloor)$ for $0\le i\le N-1$ and to the fixed very ample integral Cartier divisor $ND$, there is a unified choice of $m$ with
$$
H^j_{R(X,D)_+}\big(\Gamma_*(X,\mathcal{F},D)\big)_{[i+Nk]}=H^{j-1}\big(X,\mathcal{F}\otimes \mathcal{O}_X\big(\lfloor (i+kN)D\rfloor \big)\big)
$$
$$
=H^{j-1}\big(X,(\mathcal{F}\otimes \mathcal{O}_X(\lfloor iD\rfloor)) \otimes \mathcal{O}_X(kND)\big)=0
$$
for $|k|\ge m$, $1-N\le i\le N-1$ and $2\le j\le\dim X$  (the first equality in this  display holds in view of Remark \ref{RemarkProjToCone}\ref{itm:LocalCohomolgyVSSheafCohomolgy}). From these vanishings, for all but finitely many components as well as $\depth\big(\Gamma_*(X,\mathcal{F},D)\big)\ge 2$ (Remark \ref{RemarkProjToCone}\ref{itm:DepthOfAssociatedGradedModule}), we deduce our desired conclusion that $\Gamma_*(X,\mathcal{F},D)$ is generalized Cohen-Macaulay.
\end{proof}

Now we show that the existence of a maximal Cohen-Macaulay module over an $\mathbb{N}_0$-graded normal domain $R$ whose degree $0$ part is an $F$-finite field depends on the existence of a maximal Cohen-Macaulay sheaf on $X=\Proj(R)$, provided $X$ is locally factorial. This follows from the following corollary in conjunction with Remark \ref{RemarkProjToCone}\ref{itm:Demazure}.  In particular, if some generalized section ring of an ample $\mathbb{Q}$-divisor on $X$ admits a maximal Cohen-Macaulay module, then so does every such a generalized section ring. The dimension restriction in the statement of the next lemma is assumed only because instances of maximal Cohen-Macaulay modules  over prime characteristic $\mathbb{N}_0$-graded domains of dimension $\le 3$ are already known.

\begin{corollary}
\label{CorollaryProjectiveVariety}
Let $X$ be a normal projective variety of dimension $\ge 3$ over an $F$-finite field $k$ and let $D$ be an ample $\mathbb{Q}$-Weil divisor. Assume that $\mathcal{O}_X(\lfloor iD\rfloor)$ is an invertible sheaf for each $i\in \mathbb{Z}$, which is the case whenever $X$ is locally factorial, or $D$ is integral (and Cartier) or $R(X,D)$ satisfies the Goto-Watanabe condition $(\#)$.

\begin{enumerate} [(i)]
\item
\label{itm:MCMSheafEquivalentToExistenceOfACM}
$R(X,D)$ admits a graded maximal Cohen-Macaulay module if and only if there is a  maximal Cohen-Macaulay sheaf on $X$.

\item
\label{itm:LocallyFactorialIsNeedForEquivalenceSomeandAll}
If $X$ is locally factorial, then \label{itm:ACMisIndependeOfDivisor}  $R(X,D)$ admits a maximal Cohen-Macaulay module provided the  generalized section ring of any ample $\mathbb{Q}$-divisor over $X$ admits a maximal Cohen-Macaulay module.
\end{enumerate}
\end{corollary}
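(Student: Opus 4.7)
The plan is to prove part (i) as a biconditional, from which part (ii) follows formally.

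For the forward direction of (i), given a maximal Cohen-Macaulay sheaf $\mathcal{F}$ on $X$, I would set $M:=\Gamma_*(X,\mathcal{F},D)$. The invertibility hypothesis on $\mathcal{O}_X(\lfloor iD\rfloor)$ allows the application of Lemma~\ref{LemmaMCMSheafToModule}, yielding that $M$ is a generalized Cohen-Macaulay $\mathbb{Z}$-graded $R(X,D)$-module. Because $\dim X\ge 3$ and $\mathcal{F}$ is maximal Cohen-Macaulay, $\mathcal{F}$ has positive depth at every closed point and hence full support on the integral $X$, which forces $\dim M=\dim R(X,D)$; combined with $\depth M\ge 2$ via Remark~\ref{RemarkProjToCone}\ref{itm:DepthOfAssociatedGradedModule} this gives $\fm\notin\Ass_{R(X,D)}(M)$, so $M$ is torsion-free in the sense required by Proposition~\ref{PropositionAlternativeProof1}. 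Applying that proposition to $M$ over the $F$-finite graded local domain $R(X,D)$ produces a Veronese submodule $M^{(p^e,j)}$ which is a maximal Cohen-Macaulay $R(X,D)$-module, and this Veronese submodule inherits a $\mathbb{Z}$-grading (up to a shift) from $F^e_*(M)$, yielding the graded MCM module required for~(i).

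For the converse direction of (i), suppose $N$ is a graded maximal Cohen-Macaulay $R(X,D)$-module and set $\mathcal{G}:=\widetilde{N}$ on $X=\Proj R(X,D)$. At each $x\in X$ with associated relevant homogeneous prime $\fp$, one has $\mathcal{O}_{X,x}=(R(X,D)_{(\fp)})_{[0]}$ and $\mathcal{G}_x=(N_{(\fp)})_{[0]}$. Since $D$ is ample, for some $n>0$ there is a global section $f\in H^0(X,\mathcal{O}_X(\lfloor nD\rfloor))$ not vanishing at $x$, so $f$ is a homogeneous unit of degree $n$ in $R(X,D)_{(\fp)}$; together with the invertibility of each $\mathcal{O}_X(\lfloor iD\rfloor)$ near $x$, each homogeneous component of $R(X,D)_{(\fp)}$ is free of rank one over $\mathcal{O}_{X,x}$. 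Thus $R(X,D)_{(\fp)}$ exhibits itself as a graded ``twisted Laurent extension'' of $\mathcal{O}_{X,x}$, and under this structure the graded MCM property of $N_{(\fp)}$ translates, via matching depths and dimensions, to the ordinary MCM property of $\mathcal{G}_x=(N_{(\fp)})_{[0]}$ over $\mathcal{O}_{X,x}$. Consequently $\mathcal{G}$ is a maximal Cohen-Macaulay sheaf on $X$. The principal obstacle in the plan is precisely the verification of this last correspondence: one must check that under the invertibility assumption, each homogeneous component is indeed rank one free, and that graded MCM over such a ring reduces to ungraded MCM over the degree-zero piece, which requires bookkeeping of depths and dimensions across the homogeneous localization.

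Part (ii) is then a direct corollary. When $X$ is locally factorial, every Weil divisor on $X$ is Cartier (see \cite[Proposition 6.11]{HartshorneAlgebraicGeometry}), so the invertibility hypothesis in (i) is automatic for every ample $\mathbb{Q}$-divisor on $X$. If $R(X,D')$ admits a graded maximal Cohen-Macaulay module for some ample $\mathbb{Q}$-divisor $D'$, the converse direction of (i) applied to $D'$ extracts a maximal Cohen-Macaulay sheaf on $X$, and then the forward direction of (i) applied to $D$ produces a graded maximal Cohen-Macaulay module over $R(X,D)$.
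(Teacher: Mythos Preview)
Your proposal is correct and, for the direction ``MCM sheaf $\Rightarrow$ graded MCM module'' and for part (ii), essentially identical to the paper's proof (Lemma~\ref{LemmaMCMSheafToModule} followed by Proposition~\ref{PropositionAlternativeProof1}, with the torsion-freeness justified via $\depth\ge 2$).

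For the converse direction ``graded MCM module $\Rightarrow$ MCM sheaf'', your route differs from the paper's. You work directly with the homogeneous localization $R(X,D)_{(\fp)}$, invoking the invertibility of each $\mathcal{O}_X(\lfloor iD\rfloor)$ to see that every graded piece is free of rank one over $\mathcal{O}_{X,x}$, and then you propose to transfer the MCM property across this ``twisted Laurent'' structure---a step you correctly identify as the main bookkeeping obstacle. The paper sidesteps this entirely: it chooses $N$ with $ND$ very ample, observes that $M^{(N)}$ remains MCM over the standard graded ring $R(X,ND)=R(X,D)^{(N)}$, and uses $\widetilde{M}=\widetilde{M^{(N)}}$ on $X=\Proj R(X,D)=\Proj R(X,ND)$ to reduce to the standard graded case, where the homogeneous localization is literally $(R_{(\fp)})_{[0]}[T,T^{-1}]$ with $T$ of degree~$1$. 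This Veronese reduction buys a genuine simplification---the depth/dimension transfer becomes the elementary Laurent-polynomial fact---whereas your approach, while valid, requires you to actually carry out the twisted-Laurent verification you flagged. Note in particular that the paper's reduction does \emph{not} use the invertibility hypothesis on $\mathcal{O}_X(\lfloor iD\rfloor)$ at all for this direction; that hypothesis is needed only for the other implication via Lemma~\ref{LemmaMCMSheafToModule}.
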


\begin{proof}
\ref{itm:MCMSheafEquivalentToExistenceOfACM}: Although one side is a folklore fact, we illustrate it for the convenience of the reader (the $\mathbb{Q}$-divisor case is less treated in the literature).  Let $M$ be a graded maximal Cohen-Macaulay $R(X,D)$-module, and let $N$ be a positive integer such that $ND$ is very ample so that $R(X,ND)=R(X,D)^{(N)}$ is a standard graded subring of $R(X,D)$. By our hypothesis, $M$ is also a maximal Cohen-Macaulay $R(X,ND)$-module (Remark \ref{RemarkProjToCone} \ref{itm:FactVeroneseModulesFiniteOverVeronesSubring}) and so is its graded $R(X,ND)$-direct summand $M^{(N)}$. There is an open cover $\{D_+(f_i)\}_{i=1}^m$ of $X$, where $f_1,\ldots,f_m$ is a  $k$-vector space basis  of $R(X,ND)_{[1]}=R(X,D)_{[N]}$. Since ${\big(M^{(N)}_{f_i}\big)}_{[0]}={\big(M_{f_i}\big)}_{[0]}$ for each $1\le i\le m$, so $\widetilde{M}=\widetilde{M^{(N)}}$ are the same coherent sheaves on $X=\Proj \big(R(X,D)\big)=\Proj \big(R(X,ND)\big)$. Consequently, without loss of generality, we can assume that $D$ is very ample and $R(X,D)$ is standard. In this case, we have $M_{(\fp)}=(M_{(\fp)})_{[0]}[T,T^{-1}]$ and $R_{(\fp)}=(R_{(\fp)})_{[0]}[T,T^{-1}]$ for some variable $T$ of degree $1$ and each $\fp\in X$ and so the statement holds.

Conversely suppose that $\mathcal{F}$ is a maximal Cohen-Macaulay sheaf on $X$ and consider the $\mathbb{Z}$-graded $R(X,D)$-module $\Gamma_*(X,\mathcal{F},D)$ which is a generalized Cohen-Macaulay $R(X,D)$-module by Lemma \ref{LemmaMCMSheafToModule}. Then our statement follows from Proposition \ref{PropositionAlternativeProof1}, as $\depth_R\big(\Gamma_*(X,\mathcal{F},D)\big)\ge 2$ by Remark \ref{RemarkProjToCone}\ref{itm:DepthOfAssociatedGradedModule}.

\ref{itm:LocallyFactorialIsNeedForEquivalenceSomeandAll}: This part is an immediate consequence of \ref{itm:MCMSheafEquivalentToExistenceOfACM}.
\end{proof}

A sufficient condition involved in the statement of Corollary \ref{CorollaryProjectiveVariety} was the case where $X$ is locally factorial. This condition is satisfied for instance if $R$ is a factorial domain whose defining $\mathbb{Q}$-divisor $D$ satisfies a further condition:

\begin{lemma}
\label{LemmaFactorialImpliesLocallyFactorial}
Let  $R$ be a factorial $\mathbb{N}_0$-graded domain  such that $R_{[0]}$ is a field of arbitrary characteristic. Let $D$ be the defining ample $\mathbb{Q}$-divisor of $R$ on $X:=\Proj(R)$, i.e. $R=R(X,D)$ (see Remark \ref{RemarkProjToCone}\ref{itm:Demazure}). Let $L$ be the least common multiple of the denominators $q_V$ appearing in the presentation $D=\sum\limits_{\text{prime divisor} \ V} (p_V/q_V) V$, where for each $V$ the two integers  $p_V,q_V$ are coprime and $q_V\ge 1$. Then the following statements hold.

\begin{enumerate}[(i)]
\item\label{itm:ProjLocallyFactorial}
$\Proj(R)$ is almost locally factorial in the sense that each of its local rings has torsion divisor class group.

\item\label{itm:ProjFactorial}
$X$ is locally factorial if and only if the integral divisor  $LD$ is a Cartier divisor.

\item\label{itm:ProjFactorialityCounterexample}
If $R=K[x,y,z]$ where $x$ (respectively,  $y$ and $z$) has degree $2$ (respectively,  $3$ and $4$), then $\Proj(R)$ is almost locally factorial  but it is not locally factorial.
\end{enumerate}
\end{lemma}

\begin{proof} 
\ref{itm:ProjLocallyFactorial}: Since $R(X,D)$ is factorial by our hypothesis,   it turns out from \cite[Corollary 1.7 and Theorem 1.6]{WatanabeSomeRemarks} that $\Cl(X)=\{kLD:k\in \mathbb{Z}\}$. Since some multiple $ND$ of $D$ generates an (ample) invertible sheaf as the $\mathbb{Q}$-divisor $D$ is ample, hence $LND$ is Cartier and it follows that $\Cl(X)/\Pic(X)$ is torsion. Let $x \in X$ be any point and let $U:=\Spec(A) \subset X$ be an affine open neighborhood of $x \in X$. Set
$A_x:=\mathcal{O}_{X,x}$. Then we want to show that $\Cl(A_x)$ is torsion. To this aim, letting $\fp \subset A_x$ be a prime ideal of height one, we will show that there is some $N>0$ such that the reflexive closure of $\fp^{N}$ is principal by the definition of the divisor class group. Lift $\fp$ to a prime ideal $\fq \subset A$ via the localization map. So $\fq$ defines a Weil prime divisor $V(\fq) \subset U$. Denote by $D'$ the Zariski closure of $V(\fq)$ in $X$. So $D'$ defines a Weil divisor on $X$ (see \cite[Proposition 11.40]{GortzWedhornAlgebraic}, or \cite[II, Proposition 6.5(a)]{HartshorneAlgebraicGeometry}). Since the quotient group $\Cl(X)/\Pic(X)$ is torsion, there exists $N>0$ such that $ND'$ is locally principal, which gives that the reflexive closure of $\fp^{N}$ is principal. Hence  we proved that $\Cl(A_x)$ is torsion, as desired.\footnote{In \cite[Corollaire (21.6.10)]{GrothendieckEGAIV2}, it is proved that $\Cl(X)/\Pic(X)$ is trivial if and only if $X$ is locally factorial.}

\ref{itm:ProjFactorial}: Suppose that the integral Weil divisor $LD$ is Cartier. Then we get $\Cl(X)=\Pic(X)=\mathbb{Z}[LD]$ by the proof of part (i). We can repeat the same argument of (i) to show that $\Cl(\mathcal{O}_{X,x})=0$ for every $x \in X$. Hence $X$ is locally factorial. Conversely, suppose that $X$ is locally factorial. Then the statement holds in view of \cite[II, Proposition 6.11]{HartshorneAlgebraicGeometry}.

%then it follows that $R(X,LD)$ satisfies the Goto-Watanabe condition (\#) (Remark \ref{RemarkSectionRingOfAmpleGotoWatanabeCondition}). Thus, arguing  similarly as in the three last lines of the proof of  \cite[Lemma 5.1.10]{GotoWatanabeOnGraded} with factoriality in place of Cohen-Macaulayness we can deduce that 

%Applying \cite[Corollary 1.7]{WatanabeSomeRemarks} for a second time, now we can deduce that the ample integral divisor $LD$ has a factorial section ring. Without loss of generality, we can assume that $D$ is an ample  integral Weil divisor, in particular $R(X,D)$ satisfies the Goto-Watanabe condition $(\#)$ (Remark \ref{RemarkSectionRingOfAmpleGotoWatanabeCondition}). Then, the rest of the proof goes 

\ref{itm:ProjFactorialityCounterexample}: It is readily seen that the affine open subset $(R_Z)_{[0]}$ of $X=\Proj(R)$ is
$$
K[x^2/z,y^4/z^3,xy^2/z^2]\cong K[a,b,c]/(ab-c^2),
$$
which is not factorial (see \cite[Example 2.3]{GriffithSomeResults}). Alternatively, we can appeal to \cite[Theorem 2.7]{RobbianoFactorial} to conclude that $X$ is not locally factorial.
\end{proof}

\begin{remark}
\label{RemarkTavanfarReduction}
In \cite{TavanfarReduction}, it is proved that the small Cohen-Macaulay conjecture can be reduced to the case of existence of maximal Cohen-Macaulay modules over some excellent (incomplete) factorial domain that is a homomorphic image of a regular local ring. Using a graded analogue of the methods used in the proof of \cite{TavanfarReduction}, one can see that the existence of a maximal Cohen-Macaulay module over an $\mathbb{N}_0$-graded normal domain $R$, with  $R_{[0]}$ an $F$-finite field, reduces to the existence of a maximal Cohen-Macaulay module over an $\mathbb{Z}$-graded factorial domain $U$ that is possibly not  positively graded  with $U_{[0]}$ a field (depending on the degrees of the generators of the prime almost complete intersection ideal involved in the construction). The case where $U$ is $\mathbb{N}_0$-graded and $U_{[0]}$ is a field  can be useful for   relaxing the factorial condition in the statement of  Corollary \ref{CorollaryProjectiveVariety}. At the time of writing this paper, we do not know how to reduce ourselves to the case of such $\mathbb{N}_0$-graded factorial domains in general.
\end{remark}

One may ask whether the converse of Proposition \ref{PropositionAlternativeProof1} holds, i.e. whether  Cohen-Macaulayness of some  $M^{(p^e,j)}$ forces  $M$ to be generalized Cohen-Macaulay. This question is considerable from the small Cohen-Macaulay conjecture point of view, because an affirmative answer makes no chance for any $R^{(p^e,j)}$ to be Cohen-Macaulay provided $R$ is not generalized Cohen-Macaulay. As a second application of Lemma \ref{LemmaMCMSheafToModule}, one observes that the converse of proposition \ref{PropositionAlternativeProof1} holds under a ubiquitous condition.  
However,  Corollary \ref{CorollaryVeroneseCMImpliesGeneralizedCohenMacaulay} does not say anything about the case of a generalized section ring of a non-integral ample $\mathbb{Q}$-divisor  even over a locally factorial projective variety.

\begin{corollary}
\label{CorollaryVeroneseCMImpliesGeneralizedCohenMacaulay}
Let $R$ be a Noetherian normal  $\mathbb{N}_0$-graded domain of dimension $\ge 2$, where $R_{[0]}$ is an $F$-finite field and let $M$ be a finitely generated $\mathbb{Z}$-graded module of depth $\ge 2$. Suppose that  $R$  satisfies the Goto-Watanabe condition $(\#)$  (e.g. $R$ is standard or is the section ring of an ample integral (Cartier) divisor on $\Proj(R)$). If the Veronese module  $M^{(p^e,j)}$ is Cohen-Macaulay for a single choice $(e,j)$ such that $e\ge 1$ and $0\le j\le p^e-1$, then $M$ is  generalized Cohen-Macaulay.
\end{corollary}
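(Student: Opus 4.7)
The plan is to pass to the projective variety $X := \Proj(R)$, produce a maximal Cohen-Macaulay coherent sheaf on $X$ from the hypothesis that some Veronese submodule is Cohen-Macaulay, and then invoke Lemma \ref{LemmaMCMSheafToModule} together with the depth bound on $M$ to conclude that $M$ is generalized Cohen-Macaulay. The case $\dim R = 2$ is trivial (then $\depth M \ge 2$ already forces $M$ to be Cohen-Macaulay), so I would assume $\dim R \ge 3$, hence $\dim X \ge 2$, so that the dimension hypothesis of Lemma \ref{LemmaMCMSheafToModule} is available.

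First, by Remark \ref{RemarkProjToCone}\ref{itm:Demazure}, I would realize $R = R(X,D)$ for some ample $\mathbb{Q}$-Weil divisor $D$ on $X$. The hypothesis $(\#)$, through \cite[Lemma 5.1.2]{GotoWatanabeOnGraded}, guarantees that $\mathcal{O}_X(\lfloor iD \rfloor)$ is an invertible sheaf for every $i \in \mathbb{Z}$, which is precisely the extra condition needed to apply Lemma \ref{LemmaMCMSheafToModule}.

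Next, I would translate the Cohen-Macaulayness of $M^{(p^e,j)}$ over the graded local ring $R^{(p^e)}$ into a geometric statement on $X = \Proj(R^{(p^e)})$. For each non-irrelevant homogeneous prime $\fp$ of $R^{(p^e)}$, the homogeneous localization admits a Laurent description $R^{(p^e)}_{(\fp)} = (R^{(p^e)}_{(\fp)})_{[0]}[T,T^{-1}]$ for some invertible homogeneous element $T$ (available because $R^{(p^e)}$ inherits $(\#)$, so passing to a further Veronese of $R^{(p^e)}$ becomes standard), and a compatible Laurent decomposition of $M^{(p^e,j)}_{(\fp)}$. Consequently Cohen-Macaulayness of the module passes to its degree-zero component, which computes the stalk of $\widetilde{M^{(p^e,j)}}$ at that point, so $\widetilde{M^{(p^e,j)}}$ is a maximal Cohen-Macaulay sheaf on $X$. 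A direct check on standard opens $D_+(f)$ (with $f$ homogeneous in $R^{(p^e)}$) then identifies $\widetilde{M^{(p^e,j)}}$ with the twist $\widetilde{M} \otimes \mathcal{O}_X(\lfloor jD \rfloor)$, in the spirit of Remark \ref{RemarkProjToCone}\ref{itm:tiwstedassociatedSheaf}. Since $\mathcal{O}_X(\lfloor jD \rfloor)$ is a line bundle, $\mathcal{F} := \widetilde{M}$ itself is maximal Cohen-Macaulay on $X$.

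Finally, applying Lemma \ref{LemmaMCMSheafToModule} to the pair $(\mathcal{F},D)$ yields that the graded $R$-module $\Gamma_*(X,\mathcal{F},D)$ is generalized Cohen-Macaulay. The hypothesis $\depth M \ge 2$ kills $H^0_{R_+}(M)$ and $H^1_{R_+}(M)$, so the exact sequence of Remark \ref{RemarkProjToCone}\ref{itm:ExactSequenceAssociatedGradedModuleAndOriginalModule} (extended to the $\mathbb{Q}$-divisor case using the invertibility of every $\mathcal{O}_X(\lfloor iD \rfloor)$) collapses to an isomorphism $M \cong \Gamma_*(X,\mathcal{F},D)$, and generalized Cohen-Macaulayness descends to $M$. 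The main technical obstacle I anticipate is the sheaf-level identification $\widetilde{M^{(p^e,j)}} \cong \mathcal{F} \otimes \mathcal{O}_X(\lfloor jD \rfloor)$, which requires careful bookkeeping across the distinct $R$-grading and $R^{(p^e)}$-grading on the common scheme $\Proj(R) = \Proj(R^{(p^e)})$; a secondary concern is the potential non-integrality of $D$ in the Demazure realization, which causes no trouble at the sheaf level thanks to $(\#)$ but must be addressed when quoting the exact sequence relating $M$ to $\Gamma_*(X,\mathcal{F},D)$.
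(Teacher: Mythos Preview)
Your proposal is correct and follows essentially the same route as the paper: produce a maximal Cohen-Macaulay sheaf on $X=\Proj(R)$, apply Lemma \ref{LemmaMCMSheafToModule}, and then use the exact sequence together with $\depth M\ge 2$ to identify $M$ with $\Gamma_*(X,\widetilde{M},D)$. The one simplification you are missing is that the paper disposes of the index $j$ at the outset by observing $M^{(p^e,j)}=M(j)^{(p^e)}$ and replacing $M$ by $M(j)$; after this shift the sheaf identification becomes the one-line check $(M^{(p^e)}_{f_i})_{[0]}=(M_{f_i})_{[0]}$ on a suitable cover, so $\widetilde{M^{(p^e)}}=\widetilde{M}$ directly, and the ``main technical obstacle'' you flag (the twist identification $\widetilde{M^{(p^e,j)}}\cong\widetilde{M}\otimes\mathcal{O}_X(\lfloor jD\rfloor)$) disappears entirely.
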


\begin{proof}
Since we have $M^{(p^e,j)}=M(j)^{(p^e)}$, after replacing $M$ with $M(j)$ if necessary, we can assume that $M^{(p^e)}$ is maximal Cohen-Macaulay. Let $f_1,\ldots,f_n$ be a sequence of homogeneous elements of $R$ all of the same degree $kp^e$ for some fixed $k,e\in \mathbb{N}$, which generate an ideal primary to $R_+$. By our hypothesis, $M^{(p^e)}$ is a maximal Cohen-Macaulay $R^{(p^e)}$-module (Remark \ref{RemarkDifferentPerspective}\ref{itm:VeroneseSubmoduleCM}). Since $\big(M^{(p^e)}_{f_i}\big)_{[0]}=(M_{f_i})_{[0]}$ for each $1\le i\le n$, $\widetilde{M^{(p^e)}}=\widetilde{M}$ is a maximal Cohen-Macaulay sheaf on $X:=\Proj(R^{(p^e)})=\Proj(R)$ (see the first paragraph of the proof of Corollary \ref{CorollaryProjectiveVariety}).  Thus since $\mathcal{O}_X(n)$ is an invertible sheaf for each $n\in \mathbb{Z}$ by our hypothesis and \cite[Lemma 5.1.2]{WatanabeSomeRemarks}, we conclude from Lemma \ref{LemmaMCMSheafToModule} that $\Gamma_*(X,\widetilde{M})=\Gamma_*\big(X,\widetilde{M},\mathcal{O}_X(1)\big)$ is a generalized Cohen-Macaulay $R$-module (recall that $R=R\big(X,\mathcal{O}_X(1)\big)$ by the normality of $R$). Finally, from the exact sequence $0\rightarrow \Gamma_{R_+}(M)\rightarrow M\rightarrow \Gamma_*(X,\widetilde{M})\rightarrow H^1_{R_+}(M)\rightarrow 0$ by \cite[5.1.6(i)]{GotoWatanabeOnGraded} as well as our depth hypothesis on $M$, we get $M\cong \Gamma_*(X,\widetilde{M})$ and so the corollary follows.
\end{proof}

In spite of Corollary \ref{CorollaryVeroneseCMImpliesGeneralizedCohenMacaulay}, it is still possible that $F^e_*(R)$ admits a Cohen-Macaulay direct summand even when $R$ is not generalized Cohen-Macaulay. However, this Cohen-Macaulay direct summand is not expected to be some Veronese submodule $R^{(p^e,j)}$ anymore because of Corollary \ref{CorollaryVeroneseCMImpliesGeneralizedCohenMacaulay} (if $R$ is not generalized Cohen-Macaulay). We give an example which can arise as a polynomial extension of a graded generalized Cohen-Macaulay domain (such a polynomial extension is not generalized Cohen-Macaulay). To this aim, we need the following new definition.

\begin{definition}
\label{DefinitionDirectSummandOfVeroneseSubmodule}
Let $C$ be a Noetherian $\mathbb{Z}$-graded ring and let $\mathbf{X}:=X_1,\ldots,X_c$ be a sequence of indeterminates over $C$ such that degree$(X_i)=h_i$,  where $h_i$ is possibly zero. Consider $C[\mathbf{X}]$ as an $\mathbb{Z}$-graded ring via the total degree obtained by sum of  degrees of elements appearing in each monomial. Fix some $e\in \mathbb{N},\ 0\le i\le p^e-1$ and $\underline{t}=(t_1,\ldots,t_c)\in \mathbb{N}_0^c$ such that $0\le t_1,\ldots,t_c\le p^e-1$.  Then, we define $C[\mathbf{X}]^{(p^e,i,\underline{t})}$ as 
$$
C[\mathbf{X}]^{(p^e,i,\underline{t})}=\bigoplus\limits_{k\in \mathbb{Z}}\ \Big(\bigoplus\limits_{\underline{n}\in \mathbb{N}_0^c}\ {C}_{[(k-\sum\limits_{u=1}^cn_uh_u)p^e+i-\sum\limits_{u=1}^ct_uh_u]} X_1^{n_1p^e+t_1}\cdots X_c^{n_cp^e+t_c}\ \Big).
$$
\end{definition}

\begin{lemma}
\label{LemmaDirectSummandOFVeronese}
With the notation and hypothesis of Definition \ref{DefinitionDirectSummandOfVeroneseSubmodule}, the following statements hold.
\begin{enumerate}[(i)]
\item
\label{itm:DirectSummandOFVeroneseIsModule}
$C[\mathbf{X}]^{(p^e,i,\underline{t})}$ admits a $\frac{1}{p^e}\mathbb{Z}$-graded $C[\mathbf{X}]$-module whose $\frac{h}{p^e}$-th component with $h=kp^e+j$ is
$$
\begin{cases}0,& j\neq i\\\bigoplus\limits_{\underline{n}\in \mathbb{N}_0^c}\ {C}_{[(k-\sum\limits_{u=1}^cn_uh_u)p^e+i-\sum\limits_{u=1}^ct_uh_u]} X_1^{n_1p^e+t_1}\cdots X_c^{n_cp^e+t_c}\ \subseteq C[\mathbf{X}]_{[h]},& j=i\end{cases}.
$$

\item
\label{itm:DirectSummandOfVeroneseIsDirectSummandOFVeronese}
The Veronese module $C[\mathbf{X}]^{(p^e,i)}$ as a graded $C[\mathbf{X}]$-module admits the graded decomposition
$$
C[\mathbf{X}]^{(p^e,i)}\cong\bigoplus\limits_{\substack{\underline{t}\in \mathbb{N}_0^c \\ 0\le t_1,\ldots,t_c\le p^e-1}} C[\mathbf{X}]^{(p^e,i,\underline{t})}.
$$

\item
\label{itm:LiftingTheVeroneseSubmodule}
Let $h_u=1$ for each $1\le u \le c$. For some fixed $0\le i\le p^e-1$ and $0\le j\le i$, we set $t:=(0,\ldots,0,\underset{}{j},0,\ldots,0)\in \mathbb{N}_0^c$ where $j$ appears at some (arbitrary) $u$-th spot.  Then
$$
C[\mathbf{X}]^{(p^e,i,\underline{t})}/\mathbf{X}C[\mathbf{X}]^{(p^e,i,\underline{t})}\cong C^{(p^e,i-j)}~\mbox{as}~C\mbox{-modules}.
$$
\end{enumerate}
\end{lemma}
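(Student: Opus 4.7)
The plan is to verify all three parts directly from the definition, making repeated use of the fact that for the Frobenius $C[\mathbf{X}]$-action on a Veronese summand, multiplication by $r \in C[\mathbf{X}]$ corresponds to ordinary multiplication by $r^{p^e}$. Since the ambient ring has characteristic $p$, the Frobenius identity $(u+v)^{p^e} = u^{p^e}+v^{p^e}$ tells us that $r^{p^e}$ is a $C^{p^e}$-linear combination of pure monomials of the form $X_1^{p^e a_1} \cdots X_c^{p^e a_c}$ in the $X$-variables. This is the single structural observation driving all three statements.

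For part \ref{itm:DirectSummandOFVeroneseIsModule}, I would check closure of $C[\mathbf{X}]^{(p^e,i,\underline{t})}$ under the action described above: multiplying a summand $c_\alpha X_1^{n_1 p^e + t_1} \cdots X_c^{n_c p^e + t_c}$ by such a monomial $c' X_1^{p^e a_1} \cdots X_c^{p^e a_c}$ produces $c' c_\alpha X_1^{(n_1+a_1)p^e+t_1} \cdots X_c^{(n_c+a_c)p^e+t_c}$, still of the prescribed form, with the $C$-degree shift absorbed into the integer $k$ of the definition. The $\frac{1}{p^e}\mathbb{Z}$-grading is then read off by computing total degrees: each summand in the formula has total degree $\deg(c_\alpha) + \sum_u (n_u p^e + t_u)h_u = k p^e + i$, which shows the claimed component description.

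For part \ref{itm:DirectSummandOfVeroneseIsDirectSummandOFVeronese}, the point is that every monomial $c_\alpha X_1^{\beta_1} \cdots X_c^{\beta_c}$ in $C[\mathbf{X}]$ has a unique decomposition $\beta_u = n_u p^e + t_u$ with $n_u \in \mathbb{N}_0$ and $0 \le t_u \le p^e-1$. Sorting the homogeneous components of $C[\mathbf{X}]^{(p^e,i)}$ according to the residue tuple $(t_1,\ldots,t_c)$ produces the claimed graded direct sum decomposition, and each summand inherits a $C[\mathbf{X}]$-module structure that agrees with the one in part \ref{itm:DirectSummandOFVeroneseIsModule} since the Frobenius action shifts only the $n_u$'s and not the $t_u$'s.

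For part \ref{itm:LiftingTheVeroneseSubmodule}, I would compute $\mathbf{X}\cdot C[\mathbf{X}]^{(p^e,i,\underline{t})}$ using the Frobenius action once more: multiplication by $X_v$ corresponds to ordinary multiplication by $X_v^{p^e}$, which raises the $v$-th exponent $n_v p^e + t_v$ to $(n_v+1)p^e + t_v$. Hence $\mathbf{X}\cdot C[\mathbf{X}]^{(p^e,i,\underline{t})}$ is exactly the $C$-submodule of summands in which some $n_v \ge 1$, and the quotient is spanned by monomials with $n_1 = \cdots = n_c = 0$. Under the hypotheses $h_u=1$ for every $u$ and $\underline{t} = (0,\ldots,0,j,0,\ldots,0)$, such a surviving monomial is forced to be $c_\alpha X_u^{j}$, and the $C$-degree formula from part \ref{itm:DirectSummandOFVeroneseIsModule} collapses to $\deg(c_\alpha) = kp^e + (i-j)$, yielding the desired $C$-linear isomorphism $c X_u^j \mapsto c$ onto $C^{(p^e,i-j)}$. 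The main bookkeeping subtlety throughout is to never confuse the Frobenius-twisted action with ordinary multiplication; once that distinction is kept straight, everything is essentially a direct verification on monomials.
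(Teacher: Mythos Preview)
Your proposal is correct and is precisely the kind of direct monomial-by-monomial verification the paper has in mind; in fact the paper omits the proof entirely, stating only that it ``is easy and is left to the reader.'' Your explicit bookkeeping with the Frobenius-twisted action and the unique base-$p^e$ decomposition of exponents fills in exactly what the authors suppressed.
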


\begin{proof}
The proof is easy and is left to the reader.
\end{proof}

\begin{corollary}\label{CorollaryTrivialDeformation}
Let $R$ be a generalized Cohen-Macaulay $\mathbb{Z}$-graded $F$-finite local domain. Consider the polynomial extension $A:=R[\mathbf{X}]$ of $R$ at finitely many variables. Then $F^e_*(A)$ comprises a maximal Cohen-Macaulay direct summand.
\end{corollary}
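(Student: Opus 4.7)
My plan is to lift a known maximal Cohen--Macaulay Veronese submodule of $R$ to $A$ via the refined decomposition of $F^{e_0}_*(A)$ provided by Lemma \ref{LemmaDirectSummandOFVeronese}. The key point is that, after passing through this refinement, the Frobenius-twisted $A$-action on the chosen summand unwinds into an ordinary polynomial-module structure over a suitable Veronese subring, on which a regular sequence of length $c$ is manifest.

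First I will apply Proposition \ref{PropositionAlternativeProof1} to $R$ itself. Since $R$ is a non-zero $\mathbb{Z}$-graded $F$-finite local domain that is generalized Cohen--Macaulay and hence torsion-free over itself, the proposition produces $e_0\geq 1$ and $0\leq j_0\leq p^{e_0}-1$ such that the Veronese submodule $R^{(p^{e_0},j_0)}$ is maximal Cohen--Macaulay over $R$, equivalently over $R^{(p^{e_0})}$ by Remark \ref{RemarkDifferentPerspective}\ref{itm:VeroneseSubmoduleCM}.

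Next I will endow $A=R[\mathbf{X}]$ with the grading extending that of $R$ and with $\deg(X_u)=1$ for each $u$. By Lemma \ref{LemmaDirectSummandOFVeronese}\ref{itm:DirectSummandOfVeroneseIsDirectSummandOFVeronese}, the submodule
\[
N := A^{(p^{e_0},j_0,\underline{0})}, \qquad \underline{0}:=(0,\ldots,0)\in\mathbb{N}_0^c,
\]
is a graded $A$-direct summand of $A^{(p^{e_0},j_0)}$ and, via Remark \ref{RemarkDifferentPerspective}\ref{itm:VeroneseSubmoduleDirectSummandOfFrobeniusDirectImage}, of $F^{e_0}_*(A)$. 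Because the Frobenius-twisted $A$-action on $N$ sends $r\in R$ to $r^{p^{e_0}}$ and $X_u$ to $X_u^{p^{e_0}}$, it factors through the subring $B:=R^{(p^{e_0})}[X_1^{p^{e_0}},\ldots,X_c^{p^{e_0}}]\subseteq A$; and a direct unpacking of Definition \ref{DefinitionDirectSummandOfVeroneseSubmodule} identifies $N$, as a $B$-module, with the ordinary polynomial module $R^{(p^{e_0},j_0)}[Y_1,\ldots,Y_c]$ over $R^{(p^{e_0})}$ under $Y_u\leftrightarrow X_u^{p^{e_0}}$.

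From this description it is immediate that $Y_1,\ldots,Y_c$ is a regular sequence on $N$ whose successive quotient is $R^{(p^{e_0},j_0)}$, consistent with Lemma \ref{LemmaDirectSummandOFVeronese}\ref{itm:LiftingTheVeroneseSubmodule}. Since $R^{(p^{e_0},j_0)}$ has depth $\dim R$ over $R^{(p^{e_0})}$, I will conclude $\depth_B(N)=c+\dim R=\dim A$. Using that $R$ is $F$-finite, $A$ is module-finite over $B$ and $\sqrt{\mathfrak{M}_A B}=\mathfrak{M}_B$, so the Independence Theorem will identify this depth with the depth of $N$ as an $A$-module under the Frobenius action, completing the proof that $N$ is a maximal Cohen--Macaulay $A$-summand of $F^{e_0}_*(A)$. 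The one delicate point throughout the execution is to keep the tautological $A$-module structure on $N\subseteq A$ rigorously apart from the Frobenius-twisted one used here; once that translation into ordinary multiplication in $B$ has been carried out, no further obstruction remains.
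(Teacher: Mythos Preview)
Your proposal is correct and follows essentially the same approach as the paper: both apply Proposition \ref{PropositionAlternativeProof1} to obtain a maximal Cohen--Macaulay Veronese submodule $R^{(p^{e_0},j_0)}$, then take the refined summand $N=A^{(p^{e_0},j_0,\underline{0})}$ of $F^{e_0}_*(A)$ from Lemma \ref{LemmaDirectSummandOFVeronese}\ref{itm:DirectSummandOfVeroneseIsDirectSummandOFVeronese} and Remark \ref{RemarkDifferentPerspective}\ref{itm:VeroneseSubmoduleDirectSummandOfFrobeniusDirectImage}, and use Lemma \ref{LemmaDirectSummandOFVeronese}\ref{itm:LiftingTheVeroneseSubmodule} together with the regularity of $\mathbf{X}$ on $N$ to conclude. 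The paper obtains the regularity of $\mathbf{X}$ directly from $N$ being a direct summand of $F^{e_0}_*(A)$ (on which $\mathbf{X}$ acts as the regular sequence $\mathbf{X}^{p^{e_0}}$ in $A$), whereas you spell out the identification $N\cong R^{(p^{e_0},j_0)}[Y_1,\ldots,Y_c]$ over $B=R^{(p^{e_0})}[X_1^{p^{e_0}},\ldots,X_c^{p^{e_0}}]$ and invoke the Independence Theorem; this is the same argument with more of the bookkeeping made explicit.
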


\begin{proof}
By Proposition \ref{PropositionAlternativeProof1}, $R^{(p^e,i)}$ is a maximal Cohen-Macaulay module for some $e,i$. So in view of Lemma \ref{LemmaDirectSummandOFVeronese}\ref{itm:LiftingTheVeroneseSubmodule}, it suffices to notice that $\mathbf{X}$ is a regular sequence on $R[\mathbf{X}]^{(p^e,i,\underline{0})}$ which is immediate as  $R[\mathbf{X}]^{(p^e,i,\underline{0})}$ is a direct summand of $F^e_*(A)$ in view of Lemma \ref{LemmaDirectSummandOFVeronese}\ref{itm:DirectSummandOfVeroneseIsDirectSummandOFVeronese} and Remark \ref{RemarkDifferentPerspective}\ref{itm:VeroneseSubmoduleDirectSummandOfFrobeniusDirectImage}.
\end{proof}

At the time of writing the paper, the authors do not know whether an analogue of Corollary \ref{CorollaryTrivialDeformation} is imaginable/provable for a non-trivial deformation of a generalized  Cohen-Macaulay graded local domain.

\subsection{Cohen-Macaulay direct summands of Frobenius direct images}

We continue by presenting additional instances of graded local rings admitting a maximal Cohen-Macaulay module. First, we have to return to the local case. In \cite{SchoutensHochster}, it is proved that an $F$-split complete local ring admits a maximal Cohen-Macaulay module if it has dimension $3$. In higher dimension,  as shown in the next lemma, the same result holds for an $F$-pure (complete) ring  provided it is generalized Cohen-Macaulay. The difference is that in dimension $3$, $F$-split rings may not be generalized Cohen-Macaulay and so the resulting maximal Cohen-Macaulay module occurs as the canonical module $\omega_M$ of $M$, where $M$ is the complement of $R$ as a direct summand in $F_*(R)$. If $R$ is $F$-pure and generalized Cohen-Macaulay, the complement $M$ of $R$ is maximal Cohen-Macaulay (in fact, the proof of \cite{SchoutensHochster} is more complicated than the proof of Lemma \ref{LemmaFPureGCMLocal}\ref{itm:FPureGCMLocalPerfectRF}, because one has to prove the Cohen-Macaulayness of $\omega_M$ which requires more work):

\begin{lemma}
\label{LemmaFPureGCMLocal}
Let $(R,\fm)$ be an $F$-finite $F$-split (equivalently, $F$-pure) generalized Cohen-Macaulay local ring and let $F_*(R)=R\oplus M$.

\begin{enumerate}[(i)]
\item
\label{itm:FPureGCMLocalPerfectRF}
If $R/\fm$ is perfect, then $M$ is a maximal Cohen-Macaulay $R$-module.	

\item
\label{itm:FPureGCMLocalGeneral}
If $R/\fm$ is imperfect  but $R$ has a coefficient field (e.g. if $R$ is complete), then $R$ admits a maximal Cohen-Macaulay $R$-module (which is possibly not $M$).
\end{enumerate}
\end{lemma}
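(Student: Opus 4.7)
The plan is to prove (i) by a direct length computation using perfectness of the residue field, and to reduce (ii) to (i) via a faithfully flat base change to a ring with perfect residue field.

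For (i), my approach is as follows. The Frobenius direct image commutes with local cohomology, so $H^i_\fm(F_*(R)) \cong F_*(H^i_\fm(R))$ for every $i$. For any finite length $R$-module $N$ in the $F$-finite local setting, $\ell_R(F_*(N)) = [k^{1/p}:k] \cdot \ell_R(N)$, which equals $\ell_R(N)$ precisely because $k = R/\fm$ is perfect. Since $R$ is generalized Cohen-Macaulay, $H^i_\fm(R)$ has finite length for $0 \le i < d := \dim R$. The splitting $F_*(R) = R \oplus M$ then yields
\[
\ell_R\big(H^i_\fm(R)\big) = \ell_R\big(H^i_\fm(F_*(R))\big) = \ell_R\big(H^i_\fm(R)\big) + \ell_R\big(H^i_\fm(M)\big)
\]
for $i < d$, forcing $H^i_\fm(M) = 0$ in this range. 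Since $M$ is a nonzero finitely generated $R$-module (nonzero because Frobenius is not surjective on any $R$ of positive dimension) with $\depth_R M \ge d$, it is maximal Cohen-Macaulay.

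For (ii), the length argument of (i) fails because $\ell_R(F_*(N)) = p^\alpha \ell_R(N)$ with $\alpha = \log_p[k^{1/p}:k] \ge 1$, and one computes $\ell_R(H^i_\fm(M)) = (p^\alpha - 1)\cdot \ell_R(H^i_\fm(R))$, which is nonzero whenever $R$ fails to be Cohen-Macaulay in degree $i$. My plan is to replace $R$ by a flat local extension with perfect residue field. Using the coefficient field, fix a presentation $R \cong k[[\mathbf{x}]]/I$ and set $R^* := k^{\perf}[[\mathbf{x}]]/I$. Then $R \to R^*$ is faithfully flat with closed fiber the perfect field $k^{\perf}$, and $R^*$ is Noetherian, $F$-finite, $F$-split, and generalized Cohen-Macaulay (since finite length of local cohomology is preserved under this flat base change with field closed fiber). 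Applying (i) to $R^*$ then furnishes an MCM $R^*$-module $M^*$.

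The remaining step is to descend $M^*$ to a finitely generated MCM $R$-module, which is the main obstacle since $R^*$ is not finite over $R$ and so $M^*$ is not naively an $R$-module of finite type. My plan is to exhibit an MCM $R^*$-module in the form $N \otimes_R R^*$ with $N$ a finitely generated $R$-module---for instance by decomposing some $F^e_*(R) \otimes_R R^*$ into direct summands matched with a decomposition of $F^e_*(R^*)$ under a carefully chosen correspondence---so that faithful flatness with field closed fiber gives $\depth_R N = \depth_{R^*}(N \otimes_R R^*) = \dim R$, whence $N$ is the desired MCM module. The delicate point is aligning the base-changed decomposition of $F^e_*(R)$ with the splittings in $F^e_*(R^*)$, and this is precisely why the lemma's MCM module in (ii) need not coincide with the original complement $M$ in $F_*(R)$.
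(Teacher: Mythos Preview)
Your argument for (i) is correct and matches the paper's proof exactly.

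For (ii), your overall strategy---base change to a flat local extension with perfect residue field, apply (i) there, then descend---is the paper's strategy too, but your descent step has a real gap. You propose to find an MCM $R^*$-module of the form $N\otimes_R R^*$ by ``aligning'' a decomposition of $F^e_*(R)\otimes_R R^*$ with one of $F^e_*(R^*)$. This does not work as stated: when $k$ is imperfect, $F_*(R)\otimes_R R^*$ and $F_*(R^*)$ are not isomorphic. Over $R^*$ the first has rank $p\cdot[k^{1/p}:k]$ (generically) while the second has rank $p$, so there is no natural correspondence of direct summands, and in particular the complement $M^*$ in $F_*(R^*)$ need not be of the form $N\otimes_R R^*$ for any finitely generated $R$-module $N$. (A secondary issue: writing $R\cong k[[\mathbf{x}]]/I$ presumes $R$ is complete, whereas the hypothesis is only that $R$ has a coefficient field.)

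The paper's descent avoids this entirely. Starting from the coefficient field $K\subseteq R$, it forms $R_{K^\infty}:=(R\otimes_K K^\infty)_{\fm_\infty}$, checks this is Noetherian, flat and weakly unramified over $R$ with perfect residue field, and still $F$-pure and generalized Cohen--Macaulay; then (i) gives an MCM module $M_{K^\infty}$ over $R_{K^\infty}$. The key trick is that a finite presentation matrix $H$ of $M_{K^\infty}$ involves only finitely many scalars from $K^\infty$, so adjoining them to $K$ yields a \emph{finite} purely inseparable extension $L/K$ and a module $N$ over $(R\otimes_K L)_{\fm_L}$ with $N\otimes (R_{K^\infty})\cong M_{K^\infty}$. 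By faithfully flat descent $N$ is MCM over $(R\otimes_K L)_{\fm_L}$, and since $L/K$ is finite this ring is module-finite over $R$, so $N$ is a finitely generated MCM $R$-module. This intermediate finite extension is the missing idea in your proposal.
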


\begin{proof}
\ref{itm:FPureGCMLocalPerfectRF}: The decomposition $F_*(R)=R\oplus M$ yields \begin{equation}
\label{length1}
F_*\big(H^i_{\fm}(R)\big)\cong H^i_{\fm}\big(F_*(R)\big)=H^i_{\fm}(R)\oplus H^i_{\fm}(M)
\end{equation}
for each $i\in \mathbb{N}_0$. On the other hand, in view of  perfectness of $R/\fm$, one can readily see  that  $\ell\Big(F_*\big(H^i_{\fm}(R)\big)\Big)=\ell\big(H^i_{\fm}(R)\big)$ for all $i$, thus by applying the length function to $(\ref{length1})$, we get $\ell\big(H^i_{\fm}(M)\big)=0$ for each $0\le i\le \dim(R)-1$.

\ref{itm:FPureGCMLocalGeneral}: By our hypothesis, we have a coefficient field $K$ of $R$. Then $\fm_\infty:=\fm(R\otimes_KK^{\infty})$ is a maximal ideal of $R\otimes_KK^{\infty}$ and $(R\otimes_KK^\infty)_{\fm_\infty}$ is Noetherian by \cite[Proposition 2.2.12]{TavanfarTest}. Consequently, $R \rightarrow (R\otimes_KK^\infty)_{\fm_\infty}$ is a flat weakly unramified local homomorphism of Noetherian local rings whose target has the perfect residue field $K^\infty$.  It turns out that $R_{K^\infty}:=(R\otimes_{K}K^\infty)_{\fm_\infty}$ is also $F$-pure by virtue of \cite[Proposition 3.3(1)]{AberbachExtension}. Thus from part \ref{itm:FPureGCMLocalPerfectRF}, we conclude that $M_{K^\infty}$ is a maximal Cohen-Macaulay $R_{K^\infty}$-module, where $M_{K^\infty}$ is the complement of $R_{K^\infty}$ in $F^e_*(R_{K^\infty})$ as a direct summand. Let $H$ be a matrix of elements of $\fm R_{K^\infty}$ that are involved in the presentation of $M_{K^\infty}$ by finite free $R_{K^\infty}$-modules. There are only finitely many scalars in $K^\infty$ that appear in $H$, and after  adjoining them to $K$, we get a finite purely inseparable extension $L/K$, $\fm_L:=\fm(R\otimes_K L)$, and an $(R\otimes_KL)_{\fm_L}$-module $N$ such that $N \otimes_{(R\otimes_KL)_{\fm_L}}(R\otimes_KK^{\infty})_{\fm_\infty}\cong M_{K^\infty}$. Consequently, $N$ is maximal Cohen-Macaulay as an $(R\otimes_KL)_{\fm_L}$-module and as an $R$-module. 
\end{proof}

For the graded version of the previous lemma,  we are able to decrease the codimension of the non-$F$-pure locus and the non-Cohen-Macaulay locus.

\begin{corollary}
\label{CorollaryFPureGraded}
Let $R$ be an  $\mathbb{N}_0$-graded normal domain over an $F$-finite field $K:=R_{[0]}$ with algebraic closure $K^{\alg}$ such that the following hold.

\begin{enumerate}[(i)]
\item
All local rings of $\Proj(R\otimes_KK^{\alg})$ are $F$-pure and generalized Cohen-Macaulay (e.g. $R\otimes_KK^{\alg}$ is $F$-pure and generalized Cohen-Macaulay at each homogeneous non-maximal prime ideal).

\item
Either $\Proj (R)$ is locally factorial (e.g. $R$ is factorial as in Lemma \ref{LemmaFactorialImpliesLocallyFactorial}\ref{itm:ProjFactorial}), or $R$ satisfies the Goto-Watanabe condition $(\#)$ (e.g. $R$ is standard or the section ring of an integral ample (Cartier) divisor).
\end{enumerate} 
Then $R$ admits a $\mathbb{Z}$-graded maximal Cohen-Macaulay module.
\end{corollary}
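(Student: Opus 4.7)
The plan is to construct a maximal Cohen--Macaulay coherent sheaf on $X:=\Proj(R)$ and then invoke Corollary \ref{CorollaryProjectiveVariety}\ref{itm:MCMSheafEquivalentToExistenceOfACM}. Hypothesis (ii) supplies the required invertibility of $\mathcal{O}_X(\lfloor iD\rfloor)$ (via Lemma \ref{FactFactorialImpliesLocallyFactorial} in the locally factorial case, or via \cite[Lemma 5.1.2]{GotoWatanabeOnGraded} under $(\#)$), and Remark \ref{RemarkProjToCone}\ref{itm:Demazure} provides the ample $\mathbb{Q}$-divisor $D$ with $R\cong R(X,D)$.

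First I pass to the base change $R':=R\otimes_K K^{\alg}$ and $X':=\Proj(R')$; since $K^{\alg}$ is perfect it is trivially $F$-finite, so $R'$ is also $F$-finite. I introduce the graded $R'$-module
\[
\bar{M}:=F_*(R')\big/\phi(R'),
\]
where $\phi:R'\to F_*R'$ is the Frobenius viewed $R'$-linearly. By hypothesis (i) each non-maximal homogeneous localization $R'_{(\mathfrak{p})}$ is $F$-pure and generalized Cohen--Macaulay; $F$-finiteness upgrades $F$-purity to $F$-splitness, so $\phi$ locally splits and $\bar{M}_{(\mathfrak p)}$ is identified with the complementary summand of $R'_{(\mathfrak p)}$ inside $F_*(R'_{(\mathfrak p)})$. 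At a closed point $y\in X'$ the residue field is $K^{\alg}$ (perfect), so Lemma \ref{LemmaFPureGCMLocal}\ref{itm:FPureGCMLocalPerfectRF} immediately yields that $\bar M_y$ is maximal Cohen--Macaulay. For non-closed points $y'$, I exploit that $\bar{M}$ is a faithful $R'$-module---its annihilator in $R'$ vanishes because $R'$ is nowhere perfect---so $\dim\bar M_{y'}=\dim \mathcal{O}_{X',y'}$; combined with the preservation of the Cohen--Macaulay property under localization, this propagates MCMness from closed to all points of $X'$. Therefore the sheafification $\widetilde{\bar{M}}$ is a maximal Cohen--Macaulay coherent sheaf on $X'$.

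To descend to $X$, I use that $K^{\alg}=\varinjlim_L L$ over finite subextensions $L/K$: by standard coherent descent (only finitely many scalars of $K^{\alg}$ appear in any finite presentation of $\bar M$), there exist a finite $L/K$ and a coherent sheaf $\mathcal{F}_L$ on $X_L:=\Proj(R\otimes_K L)$ with $\mathcal{F}_L\otimes_L K^{\alg}\cong\widetilde{\bar{M}}$. The map $X'\to X_L$ is faithfully flat with artinian (hence Cohen--Macaulay) fibers, forcing $\mathcal{F}_L$ itself to be maximal Cohen--Macaulay on $X_L$. Pushing forward along the finite morphism $X_L\to X$ yields a maximal Cohen--Macaulay coherent sheaf on $X$ (finite pushforward preserves MCMness since the stalks become module-finite extensions), and Corollary \ref{CorollaryProjectiveVariety}\ref{itm:MCMSheafEquivalentToExistenceOfACM} then produces the desired $\mathbb{Z}$-graded maximal Cohen--Macaulay $R$-module.

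The principal obstacle is establishing MCMness of $\widetilde{\bar{M}}$ at every stalk of $X'$, particularly at non-closed points where the residue field is imperfect; the key insight is that faithfulness of $\bar M$ yields $\dim\bar M_{y'}=\dim\mathcal{O}_{X',y'}$ at every prime, allowing propagation of MCMness from the closed points (handled by Lemma \ref{LemmaFPureGCMLocal}\ref{itm:FPureGCMLocalPerfectRF}) via localization. A secondary subtlety is ensuring that the Frobenius construction $\bar M$ and its MCM property behave well along the pro-system $K^{\alg}=\varinjlim_L L$, which is controlled by the coherent descent and faithfully flat descent of MCMness across artinian-fiber base change.
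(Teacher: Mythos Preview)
Your proof is correct and follows essentially the same route as the paper's: both construct the sheaf $\mathcal{G}=F_*(\mathcal{O}_{X_{K^{\alg}}})/\mathcal{O}_{X_{K^{\alg}}}$ on $X_{K^{\alg}}$, verify that it is maximal Cohen--Macaulay by applying Lemma \ref{LemmaFPureGCMLocal}\ref{itm:FPureGCMLocalPerfectRF} at the closed points (whose residue field is the perfect field $K^{\alg}$), descend to a finite subextension $L/K$ by absorbing the finitely many scalars appearing in a presentation, and then push forward along the finite map $X_L\to X$ before invoking Corollary \ref{CorollaryProjectiveVariety}. Your treatment of non-closed points via full support and localization of the Cohen--Macaulay property, and your remark on artinian fibers for the flat descent $X_{K^{\alg}}\to X_L$, make explicit two steps that the paper's proof handles tacitly.
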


\begin{proof}
Let us show that $X:=\Proj (R)$ admits a maximal Cohen-Macaulay sheaf. Then our statement follows from Corollary \ref{CorollaryProjectiveVariety}. Without loss of generality, we can assume that $R$ has dimension $\ge 4$.

For any $K$-scheme $Y$ and any field extension $F/K$, we denote the base change $Y\times_KF$ by the abbreviated notation $Y_{F}$. Indeed,  $X_{K^{\alg}}=\Proj (R\otimes_KK^{\alg})$. Let $\{U_i\}_{i=1}^n$ be an affine cover of $X$ and set $\mathcal{G}:=F_*(\mathcal{O}_{X_{K^{\text{alg}}}})/\mathcal{O}_{X_{K^{\alg}}}$.   In view of Lemma \ref{LemmaFPureGCMLocal}\ref{itm:FPureGCMLocalPerfectRF}, $\mathcal{G}|_{{U_i}_{K^{\alg}}}=F_*({\mathcal{O}_{U_i}}_{{K^{\alg}}})/\mathcal{O}_{{U_i}_{{K^{\text{alg}}}}}$ is a maximal Cohen-Macaulay sheaf on the affine open subset ${U_i}_{K^{\text{alg}}}$ of $X_{K^{\text{alg}}}$ for each $1\le i\le n$, because the localization of the finitely generated $K^{\text{alg}}$-algebra ${U_i}_{K^{\alg}}$ at any closed point of ${U_i}_{K^{\alg}}$ has the perfect residue field $K^{\alg}$. Thus $\mathcal{G}$ is a maximal Cohen-Macaulay $\mathcal{O}_{X_{K^{\alg}}}$-module. Let
$$
\bigoplus_{k=1}^s\mathcal{O}_{X_{K^{\alg}}}(n_k)\overset{H}{\rightarrow} \bigoplus_{k=1}^{t}\mathcal{O}_{X_{K^{\alg}}}(m_k)\rightarrow \mathcal{G}\rightarrow 0
$$
be a presentation of $\mathcal{G}$ (\cite[Corollary 5.18 and Proposition 5.7]{HartshorneAlgebraicGeometry}). There is a finite extension $L$ of $K$ containing all coefficients in $K^{\text{alg}}$ appearing in the matrix of some $H|_{{U_i}_{K^{\alg}}}$ for any $1\le i \le n$ (we are assuming the twisted structure sheaves are free on the affine cover). Our choice of $L$ enables us to consider the coherent $\mathcal{O}_{X_L}$-module $\mathcal{G}_{L}$ as the cokernel of
$$
\bigoplus_{k=1}^s\mathcal{O}_{X_L}(n_k)\overset{H_L}{\rightarrow} \bigoplus_{k=1}^{t}\mathcal{O}_{X_L}(m_k),
$$
where $H_L$ is defined by glueing via $H_L|_{{U_i}_L}=H|_{{U_i}_{K^{\alg}}}$. From  flatness of $\pi:X_{K^{\alg}}\rightarrow X_{L}$ as well as the identity $\mathcal{G}=\pi^*(\mathcal{G}_L)$, we deduce that $\mathcal{G}_L$ is a maximal Cohen-Macaulay $\mathcal{O}_{X_L}$-module. Finally, as $\pi':X_{L}\rightarrow X$ is a finite morphism,  ${\pi'}_*(\mathcal{G}_{L})$ is a maximal Cohen-Macaulay $\mathcal{O}_X$-module.
\end{proof}

\begin{remark}
\label{RemarkAltration}
Let $R$ be a normal graded ring  as in the statement of Corollary \ref{CorollaryFPureGraded} such that $X:=\Proj (R)$ is not necessarily $F$-pure or generalized Cohen-Macaulay. Let $\pi:Y\rightarrow X:=\Proj (R)$ be a regular alteration of  $X$ ($Y$ is a nonsingular projective variety). We recall that the pushforward along $\pi$ preserves coherence, because $\pi$ is proper. The question on the existence of  coherent $\mathcal{O}_Y$-modules whose pushforward along $\pi$ are maximal Cohen-Macaulay $\mathcal{O}_X$-modules does not seem to be studied in the literature, although an affine version of  such an  example can be found in the proof of \cite[Proposition 4.11]{SchoutensHochster}.
A similar intriguing result has been established originally by Roberts (\cite{RobertsCohenMacaulay}) in equal characteristic zero in order to provide an alternative proof for the New Intersection Theorem in equal characteristic zero without reduction to prime characteristic: the derived pushforward of the structure sheaf of a resolution of singularity of a local ring $S$ of a projective variety over a field of characteristic zero is a \textit{maximal Cohen-Macaulay complex} of $S$-modules. This was improved later in the recent work of Iyengar, Ma, Schwede and Walker (\cite{IyengarMaEtAl}), where the authors show that the ``dualizing complex dual of the canonical module" provides another instance of a maximal Cohen-Macaulay complex (over any local ring admitting a dualizing complex). While maximal Cohen-Macaulay complexes can successfully be applied to take on some roles which maximal Cohen-Macaulay modules play, it is not always the case that maximal Cohen-Macaulay complexes can be a substitute for maximal Cohen-Macaulay modules (see e.g. \cite[Remark 3.17]{IyengarMaEtAl}).
\end{remark}

Now we examine the reason of assuming the perfectness on the residue field in the statement of Theorem \ref{FactNonGradedPrefectResidueField}. It turns out that while the perfectness assumption was removable in  Proposition \ref{PropositionAlternativeProof1} in the setting of graded rings and Veronese submodules,  it is essential in the local setting, as the following proposition shows.

\begin{proposition}
\label{PropositionGradedTrueNonGradedWrong}
There exists a local (non-graded) $F$-finite domain $(R,\fm)$, together with a torsion-free generalized Cohen-Macaulay $R$module $M$, such that the following hold.
\begin{enumerate}
\item
$F^e_*(M)$ decomposes into $n_e$-number of non-zero $R$-direct summands $M_{e,0},\ldots,M_{e,n_e}$ with $\lim\limits_{e\rightarrow \infty}n_e=\infty $.

\item
No direct summand $M_{e,i}$ of $F_{*}^{e}(M)$ is a maximal Cohen-Macaulay module.
\end{enumerate}
\end{proposition}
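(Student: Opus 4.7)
The plan is to exhibit an explicit pair $(R,M)$ over a carefully chosen $F$-finite imperfect residue field. Let $k$ be an $F$-finite imperfect field of characteristic $p$ (for instance $k=\mathbb{F}_p(u)$, so that $[k:k^p]=p$), and let $(A,\fm_A,k)$ be a graded generalized Cohen-Macaulay but non-Cohen-Macaulay $F$-finite normal $k$-algebra with $A_{[0]}=k$. Such an $A$ can be produced by base changing to $k$ a standard graded generalized Cohen-Macaulay non-Cohen-Macaulay $\mathbb{F}_p$-algebra, for example the Marcelo--Schenzel or Imtiaz--Schenzel rings of Remark \ref{RemarkExamples}, or a Segre/Stanley--Reisner construction in characteristic $p$. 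I would then take $R$ to be an ungraded local variant of $A$ that breaks its useful grading; either the completion $\widehat{A}$ viewed as a local (non-graded) ring, or a localization of a polynomial extension $A[T]$ at a non-homogeneous maximal ideal chosen so that the GCM non-CM property and the residue field are preserved. Set $M=R$.

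For item (a), $F$-finiteness of $R$ together with imperfection of $k$ forces the generic rank of $F^e_*(R)$ over $R$ to grow like $p^{e(\dim R+1)}$, which tends to infinity with $e$. A Krull--Schmidt analysis then gives that the number $n_e$ of indecomposable $R$-direct summands of $F^e_*(R)$ tends to infinity.

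For item (b), the key observation is that the length identity used in the proof of Theorem \ref{FactNonGradedPrefectResidueField} now reads
\[
\ell_R\bigl(F^e_*(H^j_{\fm}(R))\bigr)\;=\;[k:k^p]^e\cdot \ell_R\bigl(H^j_{\fm}(R)\bigr),
\]
which grows with $e$ at a rate comparable to $n_e$, so the pigeonhole principle no longer forces any summand to have vanishing local cohomology in degrees below $\dim R$. To upgrade this failure into an actual counterexample, I would arrange the construction so that $H^{d-1}_{\fm}(R)$ is a simple $R$-module (a socle-type cohomology), and then verify that every indecomposable $R$-direct summand of $F^e_*(R)$ inherits a nonzero socle element from $F^e_*(H^{d-1}_{\fm}(R))$ under the isomorphism $H^{d-1}_{\fm}(F^e_*(R))\cong F^e_*(H^{d-1}_{\fm}(R))$, thereby ruling out maximal Cohen-Macaulayness of each summand.

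The main obstacle is this last step: certifying that the nonzero local cohomology is distributed over every indecomposable direct summand. The philosophical reason this should work is that, unlike in the graded setup of Proposition \ref{PropositionAlternativeProof1}, no ``Veronese''-type subobject can isolate a summand free of the problematic local cohomology in the non-graded local setting; but making this rigorous likely requires a detailed analysis of the idempotent structure of $\operatorname{End}_R(F^e_*(R))$, perhaps via base change to $k^{\alg}$ followed by Galois descent, or by an explicit splitting-pattern computation in a small example where the decomposition of $F^e_*(R)$ can be written down directly.
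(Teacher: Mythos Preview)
Your proposal has a genuine gap, and the paper's proof takes a completely different route that sidesteps the obstacle you identify.

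The paper does \emph{not} construct an explicit pair $(R,M)$. Instead it argues by contradiction: suppose that for \emph{every} $F$-finite local domain and every generalized Cohen--Macaulay module whose Frobenius pushforwards decompose into unboundedly many summands, some summand is maximal Cohen--Macaulay. Now take a non--generalized Cohen--Macaulay standard $\mathbb{N}_0$-graded normal domain $S$ over an $F$-finite field; by Corollary~\ref{CorollaryVeroneseCMImpliesGeneralizedCohenMacaulay} no Veronese module $S^{(p^e,i)}$ is Cohen--Macaulay. Localize at a minimal prime $\fp$ of the non-CM locus of $S$: then $S_\fp$ is generalized Cohen--Macaulay, and the Veronese decomposition localizes to give $F^e_*(S_\fp)\cong\bigoplus_i(S^{(p^e,i)})_\fp$ with unboundedly many nonzero summands. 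The contradiction hypothesis forces some $(S^{(p^{e_1},i_1)})_\fp$ to be Cohen--Macaulay, so $\fp$ is removed from the non-CM locus of $S^{(p^{e_1},i_1)}$. Iterating (using Remark~\ref{RemarkDifferentPerspective}\ref{itm:FrobeniusDirectImageOfVeronseIsDirctSumOfVeronese} to keep the summands in Veronese form) eventually produces a globally Cohen--Macaulay Veronese module $S^{(p^{e_b},i_b)}$, a contradiction.

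By contrast, your approach attempts a direct construction and stalls exactly where you say it does: showing that the nonzero local cohomology is spread across \emph{every} indecomposable summand of $F^e_*(R)$ is the whole content of item~(b), and the heuristic that ``no Veronese-type subobject is available'' is not a proof. The idempotent/descent analysis you sketch is not carried out, and there is no reason to expect it to be tractable in a generic example. Note also that your argument for item~(a) is incomplete: growth of the generic rank of $F^e_*(R)$ does not by itself bound the number of indecomposable summands from below, since indecomposables can have arbitrarily large rank. The paper avoids both issues by inheriting the decomposition from the graded Veronese splitting, which survives localization and provides the needed $n_e\to\infty$ for free.
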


\begin{proof}
Suppose the contrary to get a contradiction. Let $R$ be a normal $\mathbb{N}_0$-graded local domain such that $R_{[0]}$ is an $F$-finite field and the $R$-modules $R^{(p^{e},i)}$ are all non-Cohen-Macaulay.  For instance, in view of Corollary \ref{CorollaryVeroneseCMImpliesGeneralizedCohenMacaulay}, we can take $R$ as any non-generalized Cohen-Macaulay standard  $\mathbb{N}_0$-graded normal domain with $R_{[0]}$ an $F$-finite field. Then the localizations of $R$ are $F$-finite. Moreover, we have the decompositions
$$
F_{*}^{e}(R)=\bigoplus_{i=0}^{p^{e}-1}R^{(p^{e},i)},\ \ \ \ F_{*}^{e'}(R^{(p^{e},i)})=\bigoplus_{j=0}^{p^{e'}-1}R^{(p^{e+e'},jp^{e}+i)}
$$
(see Remark \ref{RemarkDifferentPerspective}
\ref{itm:VeroneseSubmoduleDirectSummandOfFrobeniusDirectImage} and \ref{itm:FrobeniusDirectImageOfVeronseIsDirctSumOfVeronese}). Notice that the support of each non-zero direct summand $R^{(p^{e},i)}$ coincides with $\Spec(R)$, because $R$ is assumed to be a domain. Since $R$ is a finitely generated over a field, the non-Cohen-Macaulay locus of any finitely generated $R$-module is Zariski-closed. Let the symbol non-CM$(M)$ denote the non-Cohen-Macaulay locus of a finitely generated $R$-module $M$. Let $\fp$ be a minimal prime ideal of non-CM$(R)$. Then $R_\mathfrak{p}$ is generalized Cohen-Macaulay by \cite[9.5.1 Faltings' Annihilator Theorem]{BrodmannSharpLocal} and $F_{*}^{e}(R_\fp)=\big(F_{*}^{e}(R)\big)_{\mathfrak{p}}=\bigoplus_{i=0}^{p^{e}}(R^{(p^{e},i)})_{\mathfrak{p}}$ decomposes into strictly increasing number of non-zero direct summands tending to infinity  as $e$ increases (see Remark \ref{RemarkDifferentPerspective}
\ref{itm:MTorsionFreeImpliesGoesToInfinity}). In particular by our hypothesis, some $(R^{(p^{e_{1}},i_{1})})_{\mathfrak{p}}$ is Cohen-Macaulay. Now let $\mathfrak{q}$ be a minimal member of $$\text{non-CM}(R^{(p^{e_{1}},i_{1})})\subseteq\text{non-CM}\big(F_{*}^{e_{1}}(R)\big)\backslash\{\fp\}=\text{non-CM}(R)\backslash\{\fp\}.$$ Then $(R^{(p^{e_{i}},i_{1})})_{\fq}$ is a generalized-Cohen-Macaulay module over $R_{\fq}$ whose Frobenius direct images decompose as 
$$
F_{*}^{e'}\big((R^{(p^{e_{1}},i_{1})})_{\fq}\big)=\big(F_{*}^{e'}(R^{(p^{e_{1}},i_{1})})\big)_{\fq}=\bigoplus_{j=0}^{p^{e'}-1}(R^{(p^{e_{1}+e'},jp^{e_{1}}+i_{1})}){}_{\fq}=\bigoplus_{j=0}^{p^{e'}-1}\big(\big(R^{(p^{e_{1}},i_{1})})^{(p^{e'},j)}\big){}_{\fq}
$$
in view of Remark \ref{RemarkDifferentPerspective}\ref{itm:FrobeniusDirectImageOfVeronseIsDirctSumOfVeronese}. Thus by generalized Cohen-Macaulay property and Remark \ref{RemarkDifferentPerspective}\ref{itm:MTorsionFreeImpliesGoesToInfinity} as well as our hypothesis, some
$$
(R^{(p^{e_{2}:=e_{1}+e'},\ i_{2}:=jp^{e_{1}}+i_{1})})_{\fq}$$ is Cohen-Macaulay. That is,
$$
\text{non-CM}(R^{(p^{e_{2}},i_{2})})\subseteq \text{non-CM}\big(R^{(p^{e_{1}},i_{1})}\big)\backslash\{\fq\}\subseteq\text{non-CM}(R)\backslash\{\fp,\fq\}.
$$
Since non-CM$(R)$ is Zariski-closed, it has finitely many minimal members. So by repeating this procedure, we may assume that the codimension of the non-Cohen-Macaulay locus of some $R^{(p^{e_{n}},i_{n})}$ is strictly greater than that of $R$. Proceeding in this way, we may increase strictly the codimension of the non-Cohen-Macaulay locus of some  $F$-components, to max it out to be $\infty$, and thus to arrive eventually at some Cohen-Macaulay $F$-component $R^{(p^{e_{b}},i_{b})}$ of $R$ for some large $e_b$ and $i_b$. This is a contradiction.
\end{proof}

\begin{remark}
\begin{enumerate}[(i)]
\item
By taking the $\mathfrak{p}R_\mathfrak{p}$-adic completion of $R_\mathfrak{p}$ and $(R^{(p^e,k)})_{\mathfrak{p}}$ in the proof of Proposition \ref{PropositionGradedTrueNonGradedWrong}, one can conclude that there is a complete local $F$-finite ring admitting a generalized Cohen-Macaulay module $M$ as in Proposition \ref{PropositionGradedTrueNonGradedWrong} such that no direct summand $M_{e,i}$ of $F^e_*(M)$ is Cohen-Macaulay. 

\item
It is readily seen that the class of $F$-decomposing modules introduced in \cite[page 272]{SchoutensHochster} is a subclass of finitely generated modules $M$ for which $F^e_*(M)$ decomposes into $n_e$ number of non-zero direct summands with $\lim\limits_{e\rightarrow \infty}n_e=\infty$. The localized Veronese modules involved in the proof of Proposition \ref{PropositionGradedTrueNonGradedWrong} are instances of $F$-decomposing modules. 

\item
One may ask whether the non-graded version of Proposition \ref{PropositionAlternativeProof1} holds for generalized Cohen-Macaulay localizations of $R$ or $M$ by adding more conditions on $R$ or $M$. An affirmative answer to this question might be useful to the small Cohen-Macaulay conjecture in a positive direction.  One answer would be to have some $M$ with a similar decomposition such that $\lim\limits_{e \to \infty }\frac{n_e}{p^{\alpha e}}=\infty$, where $\alpha=\max\{\log_{p}[k(\fp):k(\fp)^{p}]:\fp\in \Spec R\}$ (that is a finite number) and $k(\fp)=R_\fp/\fp R_\fp$. However, this condition, on the one hand, is possibly stronger than what is really sufficient and on the other hand, it is apparently out of reach. For instance, in the case of Corollary \ref{CorollaryTrivialDeformation}, each $F_*^e(R[X])$ decomposes into at most $p^{2e}$ number of non-zero direct summands and some (non-Veronese) direct summand of $F_*^e(R[X])$ is Cohen-Macaulay. 
\end{enumerate}
\end{remark}

Let us give an alternative proof of Proposition \ref{PropositionAlternativeProof1}. To ensure that there is a homogeneous (full) system of parameters for $R$, we need to replace the $\mathbb{Z}$-graded assumption with being positively graded over an $F$-finite field $R_{[0]}$ in the next proposition.

\begin{proposition}
\label{UnExpectedResult}
Suppose that $R$ is a generalized Cohen-Macaulay  $\mathbb{N}_0$-graded local domain whose degree zero part is an $F$-finite field $K$ of characteristic $p>0$ with $[K:K^p]=p^\alpha$.  Then some direct summand $R^{(p^e,i)}$ of $F^e_*(R)$ is a maximal Cohen-Macaulay module.
\end{proposition}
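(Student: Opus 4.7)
My plan is to adapt Hochster's length argument in Theorem \ref{Hochster}\ref{itm:SomeVeroneseDirectSummandIsCM} to the case when $K=R/\fm=R_{[0]}$ is $F$-finite but not necessarily perfect, using the exact scaling $\ell_R(F^e_*(V))=p^{e\alpha}\ell_R(V)$ valid for every finite-length $R$-module $V$ (which follows from the fact that a composition factor $K$ becomes $F^e_*(K)$, a $K$-vector space of dimension $[K:K^{p^e}]=p^{e\alpha}$ in the Frobenius-twisted scalar action $k\cdot v=k^{p^e}v$). Set $d:=\dim R$ and $C:=\sum_{j<d}\ell_R(H^j_\fm(R))<\infty$, and fix $m$ with $\fm^m H^j_\fm(R)=0$ for every $j<d$, so that $\fm^{[p^e]}\subseteq\fm^{p^e}$ also annihilates each $H^j_\fm(R)$ once $p^e\ge m$; note that the positively graded structure over the field $K$ supplies a homogeneous full system of parameters for $R$.

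The decomposition $F^e_*(R)=\bigoplus_{i=0}^{p^e-1}R^{(p^e,i)}$ of Remark \ref{RemarkDifferentPerspective}\ref{itm:VeroneseSubmoduleDirectSummandOfFrobeniusDirectImage}, combined with the commutation of $F^e_*$ with local cohomology (since $\fm$ and $\fm^{[p^e]}$ share radicals), gives $F^e_*(H^j_\fm(R))=\bigoplus_i H^j_\fm(R^{(p^e,i)})$; applying the scaled length identity and summing over $j<d$ yields
$$
\sum_{i=0}^{p^e-1}\sum_{j<d}\ell_R\big(H^j_\fm(R^{(p^e,i)})\big)=p^{e\alpha}C.
$$
The crux of the argument is the refined lower bound $\ell_R(H^j_\fm(R^{(p^e,i)}))\ge p^{e\alpha}$ whenever $p^e\ge m$ and this summand is non-zero. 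Indeed the twisted action of $\fm$ on this summand coincides with the untwisted action of $\fm^{[p^e]}$, which vanishes; hence the summand is a $K$-vector space under the twisted action, and its $R$-length equals $[K:K^{p^e}]=p^{e\alpha}$ times the $K$-dimension of the untwisted subspace $\bigoplus_{k\equiv i\,(\mathrm{mod}\,p^e)}H^j_\fm(R)_{[k]}$, which is at least one.

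Combining the displayed equality with this lower bound shows that the number of Veronese summands $R^{(p^e,i)}$ failing to be maximal Cohen-Macaulay is at most $C$, independently of $e$ (provided $p^e\ge m$). Since $R$ is a domain, Remark \ref{RemarkDifferentPerspective}\ref{itm:MTorsionFreeImpliesGoesToInfinity} ensures $n_e:=|\{i:R^{(p^e,i)}\ne 0\}|\to\infty$ as $e\to\infty$, so picking $e$ with both $p^e\ge m$ and $n_e>C$ forces at least one maximal Cohen-Macaulay Veronese summand. The main obstacle is the refined lower bound above: this is precisely where the $F$-finiteness of $K$ enters decisively, compensating for the failure of Hochster's perfect-residue-field identity $\ell_R(F^e_*(V))=\ell_R(V)$.
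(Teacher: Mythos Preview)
Your argument is correct and genuinely different from the paper's. The paper base-changes to the perfect closure $K^\infty$, applies Hochster's original theorem to $R_{K^\infty}$ to obtain a Cohen--Macaulay Veronese piece $N=R_{K^\infty}^{(p^e,i)}$, and then descends: it compares $M=R^{(p^e,i)}\otimes_K K^\infty$ with $N$ via an explicit Hilbert-series identity and a graded Noether normalization $A_{K^\infty}$, concluding that the kernel $L$ of $M\twoheadrightarrow N$ is isomorphic to $N^{\oplus(p^{\alpha e}-1)}$ and hence $M$ (so also $R^{(p^e,i)}$, by faithfully flat descent) is maximal Cohen--Macaulay. Your route stays entirely over $K$: you replace Hochster's identity $\ell(F^e_*V)=\ell(V)$ by the scaled version $\ell_R(F^e_*V)=p^{e\alpha}\ell_R(V)$, and the decisive observation is that each non-zero $H^j_\fm(R^{(p^e,i)})$ (being annihilated by $\fm$ once $p^e\ge m$) contributes at least $p^{e\alpha}$ to the total length, so the factor $p^{e\alpha}$ cancels on both sides and the number of non-MCM summands is bounded by $C$ independently of $e$. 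This is more elementary than the paper's approach---no base change, no Noether normalization, no Hilbert-series computation---and it makes transparent that the perfect-residue-field assumption in Hochster's argument is removable precisely because the same scaling governs both the total length and the minimal contribution of a bad summand. The paper's approach, on the other hand, identifies the \emph{specific} $(e,i)$ that works (any pair for which $R_{K^\infty}^{(p^e,i)}$ is Cohen--Macaulay), rather than just asserting existence for $e\gg 0$.
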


\begin{proof}
We may and do assume that $R$ has positive dimension. Set $R_{K^\infty}:=R\otimes_{K}K^\infty$. Then $R_{K^\infty}$ is an $\mathbb{N}_0$-graded generalized Cohen-Macaulay ring with perfect residue field of positive characteristic. Notice that it is not necessary to assume that $R_{K^\infty}$ is a domain, because this property was used to say that almost all direct summands $R_{K^\infty}^{(p^e,i)}$ are non-zero modules of dimension $\dim R$. However, this latter desired property transfers from $R$ to $R\otimes_{K}K^\infty$ without the domain assumption. The same argument is valid when applying  Theorem \ref{Hochster} to $R_{K^\infty}$ to deduce that some $R_{K^\infty}^{(p^e,i)}$ is Cohen-Macaulay.
	
So $N:={R_{K^\infty}}^{(p^e,i)}$ is an $\mathbb{N}_0$-graded maximal Cohen-Macaulay $R_{K^\infty}$-module for some choices of $e$ and $i$ by Theorem \ref{Hochster}. There is a natural surjection of $\mathbb{N}_0$-graded $R_{K^\infty}$-modules
$$
\eta:M:=R^{(p^e,i)} \otimes_K K^\infty \twoheadrightarrow N:={R_{K^\infty}}^{(p^e,i)}~\mbox{defined by}~r_j\otimes k^{p^{-n}}\mapsto r_j\otimes k^{p^{e-n}}.
$$
Notice that each graded part $M_{[j]}=F^e_*(R_{[j]})\otimes_K K^\infty$ (with $j \equiv i \pmod{p^e}$) of $M$ has $K^\infty$-vector space dimension: $p^{\alpha e}\cdot \dim_K R_{[j]}$, while the $K^\infty$-vector space dimension of
$$
N_{[j]}=F^e_*(R_{[j]}\otimes_KK^\infty)
$$
agrees with $\dim_K R_{[j]}$. It follows that the Hilbert series of the kernel $L$ of the $R_{K^\infty}$-epimorphism $\eta$ satisfies
\begin{equation}
\label{ComparisionOfHilbertSeries}
\text{H}_{L}(t)=(p^{\alpha e}-1)\text{H}_{N}(t).
\end{equation}
Pick a homogeneous system of parameters $\mathbf{x}$ for $R$ and let $d_i$ denote the degree of $x_i$. Let $A$ be a graded Noether normalization for $R$ whose regular system of parameters is $\mathbf{x}$, which gives the normalization $A_{K^\infty} \rightarrow R_{K^\infty}$ with the same regular system of parameters $\mathbf{x}$. Applying the functor $-\otimes_{A_{K^\infty}}\big(A_{K^\infty}/\mathbf{x}A_{K^\infty}\big)$ to the exact sequence $0 \rightarrow L\hookrightarrow M\twoheadrightarrow N\rightarrow 0$, we get the exact sequence: 
\begin{equation}
\label{VeryUsefulExactSequence}
0 \to L/\mathbf{x}L \to M/\mathbf{x}M \rightarrow N/\mathbf{x}N \to 0,
\end{equation}
due to the Cohen-Macaulay property of $N$ and regularity of $A_{K^\infty}$. For any $j$ with $j \equiv i \pmod{p^e}$,
\begin{equation}
\label{FirstIdentity}
\dim_{K^\infty} \big(M/\mathbf{x}M\big)_{[j]}=\dim_{K^\infty} F^e_*\big(R_{[j]}/(\sum \limits_{i=1}^{\dim R} x_i^{p^e}R_{[j-d_ip^e]})\big)\otimes_K K^\infty=p^{\alpha e} \dim_{K}\ R_{[j]}/(\sum \limits_{i=1}^{\dim R}x_i^{p^e}R_{[j-d_ip^e]}),
\end{equation}
and likewise
\begin{equation}
\label{SecondIdentity}
\dim_{K^\infty}  \big(N/\mathbf{x}N\big)_{[j]}=\dim_{K^\infty} F^e_*\big(R_{[j]}/(\sum \limits_{i=1}^{\dim R}x_i^{p^e}R_{[j-d_ip^e]}) \otimes_K K^\infty\big)=\dim_{K} R_{[j]}/(\sum \limits_{i=1}^{\dim R}x_i^{p^e}R_{[j-d_ip^e]})
\end{equation} 
for any $j$. It follows that $\dim_{K^\infty} (M/\mathbf{x}M)=p^{\alpha e} \dim_{K^\infty} (N/\mathbf{x}N)$.  Moreover, in view of $(\ref{VeryUsefulExactSequence})$, we have
\begin{align*}
\mu_{A_{K^\infty}}(L)=\dim_{K^\infty}(L/\mathbf{x}L)=\dim_{K^\infty}(M/\mathbf{x}M)-\dim_{K^\infty}(N/\mathbf{x}N)&=(p^{\alpha e}-1)\dim_{K^\infty}(N/\mathbf{x}N)\\&=(p^{\alpha e}-1)\mu_{A_{K^\infty}}(N).
\end{align*}  
In view of $(\ref{FirstIdentity})$ and $(\ref{SecondIdentity})$, there is a degree-preserving one-to-one correspondence between the minimal generators of $L$ and those of $\bigoplus_{i=1}^{p^{\alpha e}-1}N$.  In conjunction with the fact that $N$ is a graded free $A_{K^\infty}$-module, this implies that there is a well-defined homogeneous $A_{K^\infty}$-epimorphism $\bigoplus_{i=1}^{p^{\alpha e}-1}N\twoheadrightarrow  L$ whose kernel has to be trivial. If otherwise, the identity $(\ref{ComparisionOfHilbertSeries})$ is violated.
\end{proof}

\end{document}